\documentclass[11pt,reqno]{article}

\usepackage[a4paper,margin=1in]{geometry}
\setlength{\parskip}{0.35em}
\setlength{\parindent}{0pt}
\usepackage{comment}

\usepackage[T1]{fontenc}
\usepackage[utf8]{inputenc} 
\usepackage{lmodern}
\usepackage{microtype}

\usepackage{amsmath,amssymb,amsfonts,amsthm,amscd,mathrsfs}
\usepackage{mathtools}
\numberwithin{equation}{section}

\DeclareMathOperator{\ord}{ord}

\DeclareMathOperator{\spec}{spec}

\usepackage{graphicx}
\usepackage{float}    
\usepackage{enumitem}

\usepackage[ruled,vlined]{algorithm2e} 
\usepackage{circuitikz}

\usepackage{xcolor}
\usepackage[colorlinks=true,
  linkcolor=green!50!black,
  citecolor=green!50!black,
  urlcolor=green!50!black]{hyperref}

\theoremstyle{plain}
\newtheorem{theorem}{Theorem}[section]
\newtheorem{corollary}[theorem]{Corollary}
\newtheorem{lemma}[theorem]{Lemma}
\newtheorem{proposition}[theorem]{Proposition}

\theoremstyle{definition}
\newtheorem{definition}[theorem]{Definition}

\theoremstyle{remark}
\newtheorem{remark}[theorem]{Remark}

\newcommand{\R}{\mathbb{R}}

\newcommand{\Z}{\mathbb{Z}}

\newcommand{\poly}{\mathrm{poly}}
\newcommand{\e}{\mathrm{e}}


\title{Diffusion Computation versus Quantum Computation: A Comparative Model for Order Finding and Factoring}

\author{
Carlos A.\ Cadavid\\
Paulina Hoyos\\
Jay Jorgenson\\
Lejla Smajlovi\'c\\
J.\ D.\ V\'elez\\
}

\date{\today}

\begin{document}
\maketitle


\begin{abstract}\noindent
We study a hybrid computational model for integer factorization in which standard digital arithmetic
is augmented by access to an \emph{iterated diffusion process} on a finite graph.  A \emph{diffusion step}
consists of applying a fixed local-averaging operator (a half-lazy random-walk matrix) to an
$\ell^{1}$-normalized state vector, together with a prescribed readout of a small number of
coordinates.  The goal is not to improve on Shor’s algorithm within the usual circuit models, but to
understand what can be achieved if diffusion is treated as a genuine hardware primitive whose
wall-clock update cost does not scale with the size of the underlying state space.

Let $N\ge 3$ be an odd composite integer with $m\geq 2$ distinct prime factors and let $b\in(\mathbb Z/N\mathbb Z)^{\ast}$ have multiplicative
order $r=\ord_N(b)$.  We attach to the cyclic subgroup $\langle b\rangle$ a weighted Cayley graph with
dyadic generators $b^{\pm 2^t}$ for $0\le t\le M=\lfloor \log_2 N\rfloor+1$, and we analyze the associated
half-lazy walk operator $W$.  Using an explicit character expansion and a doubling lemma for the
lacunary sequence $2^t\bmod r$, we prove that after $
n=O\bigl((\log_2 N)^2\bigr)
$
diffusion steps the single heat-kernel value at the identity satisfies
\[
\left|p_n(e)-\frac{1}{r}\right|\le \frac{1}{4N^2},
\]
so that $r$ is recovered uniquely by rounding $1/p_n(e)$.  Combining this diffusion order-finding
primitive with the classical reduction from factoring to order finding
yields a diffusion-assisted factoring procedure with success probability
$p(m)=1-(m+1)/2^{m}$, depending only on the number $m$ of distinct prime factors of $N$.

In parallel, we develop a complementary relation-finding mechanism: collisions among
dyadic words encountered while exploring the Cayley graph produce loop relations $D=qr$, and repeated gcd updates often stabilize to $r$ (or a small multiple). 
We include numerical examples and implementations, including cycle-based factorizations of the Fermat number
$F_5=2^{32}+1$, of $N=8{,}219{,}999$ (for comparison with recent quantum-annealing experiments),
and of $N=1{,}099{,}551{,}473{,}989$ (for comparison with contemporary quantum computing records).
\end{abstract}

\section{Introduction}
\label{sec:intro}

\subsection{Motivation and scope}

Factoring a composite integer $N$ is a cornerstone problem in computational number theory and cryptography, intimately tied to the structure of $(\Z/N\Z)^\ast$; see, e.g., \cite{HardyWright,CrandallPomerance,MenezesBook,BachShallit}.  Classical methods are highly nontrivial and often subexponential, but no polynomial-time algorithm in $\log_2 N$ is known in the standard digital model \cite{CrandallPomerance}.

Shor's algorithm shows that, in the quantum circuit model, factoring reduces to \emph{order finding} and order finding can be solved in polynomial time via quantum period finding/Fourier sampling \cite{Shor94, Sh97, NC10, EkertJozsa96}. For recent advances and generalizations see \cite{CBK22, Hh25, Ra24, Re25, XQLM23} and references therein.  In broad terms: one encodes modular exponentiation into a coherent linear evolution, extracts periodicity through interference, and completes the computation by classical post-processing (continued fractions and verification) \cite{Shor94,  Sh97, KSV99, NC10}.

This paper investigates a different computational primitive: \emph{diffusion on a finite graph} as an analog mechanism for order finding.  Specifically, we analyze an iterated half-lazy random walk (discrete-time heat flow) on a weighted Cayley graph attached to a cyclic subgroup of $(\Z/N\Z)^\ast$.  Our objective is not to improve on Shor's algorithm within the usual digital/RAM or quantum-circuit models.  Rather, we propose a \emph{hybrid} model in which standard digital arithmetic is augmented by a diffusion process that can be iterated and queried.  In this accounting, order recovery is achieved using $\poly(\log_2 N)$ digital work together with $O((\log_2 N)^2)$ diffusion iterations, under explicit interface assumptions.

\subsection{Quantum layers and diffusion iterations}

For clarity of resource accounting, we compare both non-digital paradigms schematically as alternating
applications of a linear operator and (i) an observation/projection in the quantum case,
and (ii) a classical readout followed by conditioning (post-processing) in the diffusion case:
\[
\begin{aligned}
\text{quantum:}\quad &\cdots \xrightarrow{U}\ \xrightarrow{\text{measure/proj}}\ \xrightarrow{U}\ \xrightarrow{\text{measure/proj}}\ \cdots,\\
\text{diffusion:}\quad &\cdots \xrightarrow{W}\ \xrightarrow{\text{read/cond}}\ \xrightarrow{W}\ \xrightarrow{\text{read/cond}}\ \cdots.
\end{aligned}
\]

This is bookkeeping only: $U$ is unitary (interference-capable), while $W$ is Markovian and dissipative \cite{NC10,LPW09}.
The point is to make explicit where work is assumed to occur in the hybrid model: theorems bound the number of invocations of the diffusion primitive by
$O((\log_2 N)^2)$, while the remaining processing is standard digital arithmetic.

\subsection{Physical diffusion primitive, resource scaling and locality}
Our complexity bounds track two resources separately: digital steps and diffusion steps.

\medskip
\noindent\textbf{Digital steps.}
Deterministic operations on $O(\log_2 N)$-bit integers (modular arithmetic, $\gcd$, verification, bookkeeping), all polynomial in $\log_2 N$ in the standard arithmetic/RAM sense \cite{Knuth2,BachShallit}.

\medskip
\noindent\textbf{Diffusion steps.}
One application of a fixed local-averaging operator, the half-lazy walk operator $W$ on a graph $X$, together with a prescribed readout of a small number of ``temperature'' coordinates of a $\ell^1$-normalized state vector.

The intended interpretation of a diffusion step is hardware-based. Indeed, the speedup occurs only if diffusion is treated as a genuine \emph{hardware primitive}: a single \emph{hardware-level update} that performs local averaging simultaneously at every vertex,
whose wall-clock cost (the actual elapsed real time as measured by a clock) does not scale with the
number of vertices, and whose required readout precision is achievable with $\poly(\log_2 N)$ overhead. In this model, the wall-clock cost of one diffusion step is treated as independent of the
number of vertices represented, because \emph{all vertices are updated in parallel}.
If diffusion is digitally simulated by explicitly updating a length-$r$ state vector (with
$r=\ord_N(b)$ possibly as large as $\varphi(N)$, the Euler totient function), then the advantage disappears and the procedure becomes exponential in $\log_2 N$.

This assumption shifts cost from time to physical resources.  Any literal diffuser must
represent a state over a vertex set of size $\Omega(|X|)$, so device area/energy scales
(at least) linearly with $|X|$.  Our results are therefore conceptual and model-based:
they show that, given such a diffusion primitive with sufficient readout precision,
the order $r$ is determined from a single heat-kernel value after $O((\log_2 N)^2)$
diffusion iterations.

A further operational point is \emph{locality}.  Although the underlying Cayley graph has $r$ vertices, the dynamics are specified by a short generating set and are local; thus an implementation need not pre-construct the full graph.  Instead, the diffusion mechanism can evolve the state by repeated local averaging and (conceptually) ``grow'' only the portion of the graph that is actually reached within the prescribed number of iterations, in the spirit of standard random-walk perspectives on Cayley graphs \cite{LPW09,Chung97}.

\subsection{Main results}

Fix an odd composite integer
\begin{equation}\label{eq:intro:N}
N=\prod_{i=1}^m p_i^{e_i}
\qquad (m\ge 2,\; p_i \text{ distinct odd primes}).
\end{equation}
Let $b\in(\Z/N\Z)^\ast$ have \emph{odd} order $r=\ord_N(b)$.
Without knowing $r$ in advance, we consider a weighted Cayley graph on $\langle b\rangle$ generated by the moves
$b^{\pm 2^t}$ for $0\le t\le M=\lfloor\log_2 N\rfloor+1$.
We analyze the associated half-lazy walk operator $W$ and its discrete-time heat kernel.
We prove that for
\[
n_0 = 4(M+1)\bigl(\log_2 N + 2\bigr)=O((\log_2 N)^2),
\]
the single heat-kernel value at the identity, $p_{n_0}(e)$, determines $r$ uniquely by rounding $1/p_{n_0}(e)$.

There exist deterministic algorithms which ascertain if $N$ is either a prime or the
power of a prime; see, for example, \cite{AKS04}, \cite{Be07}, or \cite{Ra80}.
In these two problems, meaning the determination of $N$ is prime or a prime power, the best known
algorithms have (classical) complexity of order $O((\log N)^{a})$ for some constant $a$.
With this, we do not view the assumption that $N$ is neither a prime nor a prime power as being restrictive, at least from the point of view of theoretical computability.

\subsection{Organization}

The paper is organized as follows.
Section~\ref{sec:prelim} develops the diffusion/heat-kernel framework on weighted graphs and Cayley graphs.
Section~\ref{3} collects the number-theoretic preliminaries used in the reduction from factoring to order finding.
Section~\ref{4} proves the main diffusion order-finding theorem. Section~\ref{5} gives pseudocode for the diffusion-assisted factoring algorithm and for numerical simulation of the diffusion primitive (e.g.\ in Python).
Section~\ref{N5} presents an accelerated, collision-based relation-finding mechanism based on detecting closed cycles and using gcd stabilization of the resulting loop lengths.
Section~\ref{N6} analyzes collision statistics for the walk on $G$, including scale estimates.
Finally, Section~\ref{66} provides illustrative numerical examples.

\section{Diffusion preliminaries}
\label{sec:prelim}

\subsection{Weighted graphs}

Throughout, graphs are finite, undirected, connected, and without self-loops.
A weighted graph $X=(V,E,w)$ consists of a finite vertex set $V$, an edge set $E$ of 2-element subsets of $V$,
and a weight function $w\colon V\times V\to\R$ with the following properties.
\begin{itemize}[leftmargin=2em]
\item \textbf{Symmetry:} $w(x,y)=w(y,x)$ for all $x,y\in V$.
\item \textbf{Non-negativity:} $w(x,y)\ge 0$ for all $x,y\in V$.
\item \textbf{Support on edges:} $w(x,y)>0$ iff $\{x,y\}\in E$.
\end{itemize}
Fix an ordering of $V$ with $|V|=k$. The adjacency operator $A_X$ is the operator on the space of functions
$f$ on $V$ such that $(A_X f)(x)=\sum_{y\in V} w(x,y)\,f(y).$

The degree of a vertex $x \in V$ is
\[
d(x)=\sum_{y\in V} w(x,y).
\]
We call $X$ \emph{regular of degree $d$} if $d(x)=d$ for all $x\in V$.

\subsection{Half-lazy random walks and the discrete-time heat kernel}
\label{subsec:random-walks}

Assume $X$ is $d$-regular. The \emph{half-lazy walk matrix} is
\begin{equation}\label{eq:W}
W = \frac12\left(I+\frac{1}{d}A_X\right).
\end{equation}
Starting from an initial probability distribution $p_0\in\R^k$ ($p_0\ge 0$, $\|p_0\|_1=1$),
define
\[
p_n = W^n p_0.
\]
Write $w_n(x,y)$ for the $(x,y)$-entry of $W^n$. This is the probability that a walk started at $y$ is at $x$ after $n$ steps.
We refer to $p_n$ as the \emph{discrete-time heat kernel}.

Namely, the discrete-time derivative $\partial_n p_n = p_{n+1}-p_n$ and the discrete (probabilistic) Laplacian $\Delta = W-I$ satisfy
\begin{equation}\label{eq:discrete-heat}
\partial_n p_n = \Delta p_n,
\end{equation}
the discrete heat equation on $X$. An explicit expression for the discrete time heat kernel $p_n$ on a regular graph is derived in \cite{CHJSV23}.

\subsection{Spectral expansion and a uniform bound}

Since the graph is $d$-regular, the random-walk matrix is
$P=\frac{1}{d}A_X$.
 Then $P$ is real symmetric and
\[
\spec(P)\subset[-1,1]
\qquad\text{(see e.g.\ \cite[Thm.~7.5]{Ni18}).}
\]
Recall that $ W=\frac12(I+P)$.
If $\theta\in\spec(P)$, then $\frac12(1+\theta)\in\spec(W)$. Hence
\[
\spec(W)=\Bigl\{\tfrac12(1+\theta):\theta\in\spec(P)\Bigr\}\subset[0,1].
\]
In particular, $W$ is symmetric and all its eigenvalues are real and nonnegative.
Let
\begin{equation}
\label{lambdas}
1=\lambda_0>\lambda_1\ge \cdots \ge \lambda_{k-1}\ge 0
\end{equation}
be the eigenvalues of $W$, with an orthonormal eigenbasis
$\{\psi_0,\psi_1,\dots,\psi_{k-1}\}$ satisfying $W\psi_j=\lambda_j\psi_j$.

Moreover, $P\mathbf{1}=\mathbf{1}$ (and hence $W\mathbf{1}=\mathbf{1}$), so we may take $
\psi_0=\frac{1}{\sqrt{k}}\mathbf{1}.
$
Therefore, for any initial distribution $p_0$ we have the spectral expansion
\[
p_n=W^n p_0=\sum_{j=0}^{k-1}\langle \psi_j,p_0\rangle\,\lambda_j^n\,\psi_j,
\]
where $\langle \psi,\phi\rangle$ denotes the usual inner product of vectors $\psi,\phi \in\mathbb{R}^k$.

\begin{proposition}\label{prop:mixing}
Let $\lambda_1$ be the largest eigenvalue of $W$ strictly less than $1$.
If $p_0$ is a probability distribution, then for every vertex $x\in V$ and every $n\ge 0$,
\[
\left|p_n(x)-\frac{1}{k}\right|\le \lambda_1^{\,n}.
\]
\end{proposition}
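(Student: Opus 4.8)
The plan is to exploit the spectral expansion $p_n=\sum_{j=0}^{k-1}\langle\psi_j,p_0\rangle\,\lambda_j^{\,n}\,\psi_j$ already recorded above, isolating the contribution of the top eigenvalue $\lambda_0=1$ and showing it equals the uniform distribution, so that the deviation $p_n(x)-1/k$ is carried entirely by the modes $j\ge1$, each damped by a factor $\lambda_j^{\,n}\le\lambda_1^{\,n}$. First I would evaluate the $j=0$ term. Since $\psi_0=\tfrac{1}{\sqrt{k}}\mathbf{1}$ and $p_0$ is a probability distribution, $\langle\psi_0,p_0\rangle=\tfrac{1}{\sqrt{k}}\sum_x p_0(x)=\tfrac{1}{\sqrt{k}}$; hence $\langle\psi_0,p_0\rangle\,\lambda_0^{\,n}\,\psi_0=\tfrac1k\mathbf{1}$, the uniform vector. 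Subtracting it gives $p_n-\tfrac1k\mathbf{1}=\sum_{j\ge1}\langle\psi_j,p_0\rangle\,\lambda_j^{\,n}\,\psi_j$.

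Rather than estimate this sum directly in $\ell^2$, which would only control the Euclidean norm of the deviation, I would pass to the kernel entries in order to obtain the pointwise ($\ell^\infty$) bound the statement demands. Using symmetry of $W$ and the orthonormal basis, the entries of $W^n$ satisfy $w_n(x,y)=\sum_{j}\lambda_j^{\,n}\,\psi_j(x)\,\psi_j(y)$, whose $j=0$ term is again $1/k$; therefore $w_n(x,y)-\tfrac1k=\sum_{j\ge1}\lambda_j^{\,n}\,\psi_j(x)\,\psi_j(y)$. Because $p_n(x)=\sum_y w_n(x,y)\,p_0(y)$ and $\sum_y p_0(y)=1$, I can write $p_n(x)-\tfrac1k=\sum_y\bigl(w_n(x,y)-\tfrac1k\bigr)p_0(y)$, so by the triangle inequality and $p_0\ge0$ it suffices to bound $\bigl|w_n(x,y)-\tfrac1k\bigr|$ uniformly in $x,y$.

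For that last step I would use $0\le\lambda_j\le\lambda_1$ for $j\ge1$ to factor out $\lambda_1^{\,n}$, then apply Cauchy--Schwarz followed by the completeness relation $\sum_{j=0}^{k-1}\psi_j(x)^2=\|\delta_x\|^2=1$: this yields $\sum_{j\ge1}|\psi_j(x)|\,|\psi_j(y)|\le(1-\tfrac1k)^{1/2}(1-\tfrac1k)^{1/2}\le1$, whence $\bigl|w_n(x,y)-\tfrac1k\bigr|\le\lambda_1^{\,n}$ and the claim follows (the same bound at $n=0$ holds since $|p_0(x)-1/k|\le1$). I do not anticipate a genuine obstacle here; the only point requiring care is resisting the temptation to estimate the deviation in $\ell^2$, and instead reducing to the individual kernel entries, so that the nonnegativity and normalization of $p_0$ convert the estimate into the stated supremum bound.
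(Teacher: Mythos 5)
Your proof is correct, and it reaches the stated pointwise bound by a route that is organized differently from the paper's. The paper applies Cauchy--Schwarz directly to the coefficient expansion
\[
p_n(x)-\tfrac1k=\sum_{j\ge1}\langle\psi_j,p_0\rangle\,\lambda_j^{\,n}\,\psi_j(x),
\]
pairing the factors $\langle\psi_j,p_0\rangle$ and $\lambda_j^{\,n}\psi_j(x)$, and then controls the two resulting sums by Parseval for $p_0$ (giving $\sum_{j\ge1}|\langle\psi_j,p_0\rangle|^2\le\|p_0\|_2^2\le\|p_0\|_1^2=1$) and by the completeness relation $\sum_j\psi_j(x)^2=1$. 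You instead first prove the uniform entrywise estimate $|w_n(x,y)-\tfrac1k|\le\lambda_1^{\,n}$ on the kernel of $W^n$ and then average against $p_0$ using only $p_0\ge0$, $\|p_0\|_1=1$. Both arguments use the same ingredients (spectral expansion, removal of the $j=0$ mode, Cauchy--Schwarz, completeness at a point), so the difference is where Cauchy--Schwarz is applied and how the dependence on $p_0$ is discharged ($\ell^2$-versus-$\ell^1$ control of the initial distribution). One small correction to your framing: the ``direct'' estimate does \emph{not} merely control the Euclidean norm of the deviation --- evaluated at a fixed vertex $x$ it yields exactly the pointwise bound, which is what the paper does. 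Your version does buy something extra, namely the intermediate statement $|w_n(x,y)-\tfrac1k|\le(1-\tfrac1k)\,\lambda_1^{\,n}$ uniformly in $x,y$, independent of $p_0$, and the slightly sharper constant $1-\tfrac1k$; also note your argument needs no separate treatment of $n=0$, since the entrywise bound already holds there.
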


\begin{proof}
Let $\psi_0=\frac{1}{\sqrt{k}}\mathbf{1}$ and extend this vector to an orthonormal eigenbasis
$\{\psi_0,\psi_1,\dots,\psi_{k-1}\}$ of $W$: $W\psi_j=\lambda_j\psi_j,$ with
$1=\lambda_0>\lambda_1\ge \lambda_2\ge \cdots$.
Since $p_0$ is a probability distribution,
\[
\langle \psi_0,p_0\rangle=\frac{1}{\sqrt{k}}\sum_{x\in V}p_0(x)=\frac{1}{\sqrt{k}}.
\]
Therefore
\[
p_n=W^n p_0=\sum_{j=0}^{k-1}\langle \psi_j,p_0\rangle\,\lambda_j^n\,\psi_j
=\frac{1}{k}\mathbf{1}+\sum_{j=1}^{k-1}\langle \psi_j,p_0\rangle\,\lambda_j^n\,\psi_j.
\]
Set $u_n=\sum_{j=1}^{k-1}\langle \psi_j,p_0\rangle\,\lambda_j^n\,\psi_j$, so $p_n=\frac{1}{k}\mathbf{1}+u_n$
and $u_n$ is orthogonal to $\mathbf{1}$.

Fix $x\in V$ and set $a_j=\langle \psi_j,p_0\rangle$ and $b_j=\lambda_j^{n}\psi_j(x)$ for $1\le j\le k-1$.
Then $u_n(x)=\sum_{j=1}^{k-1} a_j b_j$, and by Cauchy--Schwarz,
\[
|u_n(x)|
\le \Bigl(\sum_{j=1}^{k-1} |a_j|^2\Bigr)^{1/2}\Bigl(\sum_{j=1}^{k-1} |b_j|^2\Bigr)^{1/2}
= \Bigl(\sum_{j=1}^{k-1}|\langle \psi_j,p_0\rangle|^2\Bigr)^{1/2}
\Bigl(\sum_{j=1}^{k-1}\lambda_j^{2n}\psi_j(x)^2\Bigr)^{1/2}.
\]

By Parseval's identity,
\[
\|p_0\|_2^2=\sum_{j=0}^{k-1}|\langle \psi_j,p_0\rangle|^2,
\qquad\text{hence}\qquad
\sum_{j=1}^{k-1}|\langle \psi_j,p_0\rangle|^2\le \|p_0\|_2^2.
\]
Since $\lambda_j\in[0,\lambda_1]$ for $j\ge1$,
\[
\sum_{j=1}^{k-1}\lambda_j^{2n}\psi_j(x)^2
\le \lambda_1^{2n}\sum_{j=1}^{k-1}\psi_j(x)^2
\le \lambda_1^{2n}\sum_{j=0}^{k-1}\psi_j(x)^2.
\]
Finally, using $\langle \psi_j,\delta_x\rangle=\psi_j(x)$ and Parseval's identity for $\delta_x$, we see that
\[
\sum_{j=0}^{k-1}\psi_j(x)^2=\sum_{j=0}^{k-1}|\langle \psi_j,\delta_x\rangle|^2=\|\delta_x\|_2^2=1.
\]
Combining those bounds yields $|u_n(x)|\le \lambda_1^n\|p_0\|_2$.
Since $p_0\ge0$ and $\|p_0\|_1=1$, we have $\|p_0\|_2\le \|p_0\|_1=1$, hence
$|p_n(x)-\tfrac1k|=|u_n(x)|\le \lambda_1^n$.
\end{proof}

\subsection{Weighted Cayley graphs of finite abelian groups}

Let $G$ be a finite abelian group written additively, $S\subseteq G$ a symmetric generating set (so $s\in S\Rightarrow -s\in S$),
and $\alpha\colon S\to\R_{>0}$ with $\alpha(s)=\alpha(-s)$.
The weighted Cayley graph $X=\mathrm{Cay}(G,S,\alpha)$ has vertex set $G$ and edge weights
\[
w(x,y)=\alpha(x-y)\quad\text{if }x-y\in S,\qquad w(x,y)=0\text{ otherwise}.
\]
It is regular of degree
\[
d=\sum_{s\in S}\alpha(s).
\]
Characters diagonalize the adjacency operator $A_X$; see \cite{CR62} for background and \cite[Cor.~3.2]{Ba79} for the Cayley-graph spectral formula.
If $\chi$ is a character of $G$, then it is an eigenvector of $A_X$ with eigenvalue
\[
\eta(\chi)=\sum_{s\in S}\alpha(s)\chi(s),
\]
hence an eigenvector of $W$ with eigenvalue $\lambda(\chi)=\frac12\left(1+\eta(\chi)/d\right)$.

\section{Number-theoretic preliminaries}
\label{3}

This section records the elementary number theory used in the factoring reduction.
All statements below are standard; we include short proofs in the form and level of generality
needed later. General references include \cite[Chs.~1--3]{HardyWright},
\cite[Chs.~2--4]{CrandallPomerance}, and \cite[Chs.~2--4]{BachShallit}; see also
\cite[\S 2--5]{MenezesBook} for standard cryptographic formulations of the order-finding
to factoring reduction.

Let $N$ be a positive odd integer which we write as a product
\begin{equation}\label{eq:nt:N-factor}
N=\prod_{i=1}^{m} p_i^{e_i},
\end{equation}
where $m\ge 2$, the primes $p_1,\dots,p_m$ are distinct and odd, and the exponents satisfy $e_i>0$.
In particular, $N$ is neither prime nor a prime power.

Let $\mathbb{Z}_N=\mathbb{Z}/N\mathbb{Z}$ denote the residue ring modulo $N$, and let
$\mathbb{Z}_N^\ast$ denote its group of units.  There is a natural mapping
\begin{equation}\label{eq:nt:CRT}
\mathbb{Z}_N \longrightarrow
\mathbb{Z}_{p_1^{e_1}}\times \cdots \times \mathbb{Z}_{p_m^{e_m}},
\qquad
a \longmapsto \bigl(a \bmod p_1^{e_1},\dots,a \bmod p_m^{e_m}\bigr).
\end{equation}
By the Chinese Remainder Theorem (CRT), \eqref{eq:nt:CRT} is an isomorphism of rings.  Restricting to units
yields an isomorphism of groups
\begin{equation}\label{eq:nt:gN}
g_N:\mathbb{Z}_N^\ast \xrightarrow{\ \sim\ }
\mathbb{Z}_{p_1^{e_1}}^\ast\times \cdots \times \mathbb{Z}_{p_m^{e_m}}^\ast.
\end{equation}
In a slight abuse of notation, we occasionally use $x$ to denote either an element of $\mathbb{Z}_N^\ast$
or its image $g_N(x)$.

For each $i$, the group $\mathbb{Z}_{p_i^{e_i}}^\ast$ is cyclic under multiplication. Fix a generator $u_i$.
Its order is
\[
\operatorname{ord}_{p_i^{e_i}}(u_i)=\varphi(p_i^{e_i})=p_i^{e_i-1}(p_i-1)=2^{c_i}p_i',
\]
where $c_i>0$ and $p_i'$ is odd.

\medskip
\noindent
\emph{Without loss of generality we assume that the primes are ordered so that}
\begin{equation}
\label{decreciente} c_1\ge c_2\ge \cdots \ge c_m.
\end{equation}

\medskip
The following proposition shows that, assuming a parity condition on the exponents in the CRT decomposition, one can produce a nontrivial square root of $1$ modulo $N.$
\begin{proposition}\label{prop:nt:nontrivial-sqrt}
Let $a\in\mathbb{Z}_N^\ast$.  Let $s\ge 0$ be the \emph{least} integer such that
\[
a^{2^{s}q}\equiv 1 \pmod N
\]
for some odd integer $q$.  Write
\[
g_N(a)=(u_1^{d_1},\dots,u_m^{d_m}).
\]
If there exist indices $i<j$ such that $d_i$ is odd and $d_j$ is even, then $s>0$.
Moreover, setting
\[
x=a^{2^{s-1}q},
\]
one has that
\begin{equation}\label{eq. congr property}
x^2\equiv 1 \pmod N
\qquad\text{and}\qquad
x\not\equiv \pm 1 \pmod N.
\end{equation}
\end{proposition}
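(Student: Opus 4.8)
The plan is to pass through the CRT isomorphism $g_N$ and reduce everything to $2$-adic valuations of the orders of the components $u_i^{d_i}$. Write $\beta_i$ for the $2$-adic valuation of $\ord_{p_i^{e_i}}(u_i^{d_i})$, and let $v_2$ denote the $2$-adic valuation. Since $\langle u_i\rangle$ is cyclic of order $2^{c_i}p_i'$, one has $\beta_i=\max(c_i-v_2(d_i),\,0)$; in particular $\beta_i=c_i\ge 1$ when $d_i$ is odd, while $\beta_i\le c_i-1$ whenever $d_i$ is even (including $d_i\equiv 0$, where $\beta_i=0$). Because $\ord_N(a)=\lcm_i \ord_{p_i^{e_i}}(u_i^{d_i})$, its $2$-adic valuation equals $\max_i\beta_i$, and I would first observe that the minimal $s$ in the statement is exactly this quantity: the requirement $a^{2^sq}\equiv 1\pmod N$ with $q$ odd forces $2^{v_2(\ord_N a)}\mid 2^s$ and the odd part of $\ord_N a$ to divide $q$, so the least admissible $s$ is $\max_i\beta_i$. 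The presence of an index with $d_i$ odd then gives $s\ge c_i\ge 1$, which already proves $s>0$.

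Next I would compute the CRT components of $x=a^{2^{s-1}q}$. The $i$-th component is $\xi_i=u_i^{\,d_i 2^{s-1}q}$, and since $a^{2^sq}\equiv 1$ we have $\xi_i^2=1$, so $\xi_i=\pm1$ (the only square roots of $1$ in the cyclic group $\langle u_i\rangle$, as $c_i\ge1$); this already yields $x^2\equiv 1\pmod N$. The crucial step is to decide the sign of each $\xi_i$, and here the main technical point is to dispose of the odd part of the orders: from $\ord_{p_i^{e_i}}(u_i^{d_i})\mid 2^sq$ one reads off that the odd part of that order divides $q$, so the divisibility that governs whether $\xi_i$ equals $1$ becomes a purely $2$-adic question. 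Carrying this out, $\xi_i=1$ iff $2^{c_i}\mid d_i2^{s-1}q$, i.e.\ iff $s>\beta_i$, and $\xi_i=-1$ iff $s=\beta_i$ (recall $s\ge\beta_i$ for every $i$, so the case $s<\beta_i$ cannot occur).

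Finally I would read off the conclusion. Since $s=\max_i\beta_i$ is attained at some index $k$, that component has $\xi_k=-1$, whence $x\not\equiv 1\pmod N$. For the complementary inequality I would invoke the hypothesis together with the ordering \eqref{decreciente}: the odd index $i$ gives $s\ge\beta_i=c_i\ge c_j$, while the even index $j>i$ satisfies $\beta_j\le c_j-1\le s-1<s$, so $\xi_j=+1$ and therefore $x\not\equiv -1\pmod N$. Combined with $x^2\equiv 1\pmod N$, this establishes the asserted congruence. The step I expect to require the most care is the sign determination of the $\xi_i$, specifically the justification that the odd parts of the component orders divide $q$ so that only the valuations $\beta_i$ matter; once that reduction is secured, the remainder is bookkeeping with the $\beta_i$ and the chosen ordering of the primes.
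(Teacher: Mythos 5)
Your proof is correct and follows essentially the same route as the paper: reduce via the CRT isomorphism, track the $2$-adic valuations of the component orders, and use the ordering $c_1\ge\cdots\ge c_m$ to force the $j$-th component of $x$ to be $+1$. The only organizational difference is that you first identify $s=\max_i\beta_i$ and derive a complete sign rule for every CRT component (getting $x\not\equiv 1$ from an explicit $-1$ component), whereas the paper argues divisibility directly for the two relevant indices and deduces $x\not\equiv 1$ from the minimality of $s$; both are sound.
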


\begin{proof}
Assume $d_i$ is odd and $d_j$ is even.  Write $d_j=2^v d_j'$ with $v>0$ and $d_j'$ odd.

Since $a^{2^s q}\equiv 1\pmod N$, it follows that
\[
u_i^{2^s q d_i}\equiv 1 \pmod{p_i^{e_i}}.
\]
Thus the order $\operatorname{ord}_{p_i^{e_i}}(u_i):=2^{c_i}p_i'$ divides $2^s q d_i$.
Because $q$ and $d_i$ are odd, the $2$-part forces $2^{c_i}\mid 2^s$, hence $s\ge c_i\ge 1$.

Set
\[
x=a^{2^{s-1}q}=(u_1^{2^{s-1}q d_1},\dots,u_m^{2^{s-1}q d_m}).
\]
Then
\[
x^2=a^{2^s q}\equiv 1 \pmod N.
\]
By minimality of $s$, we have $x\not\equiv 1\pmod N$.

It remains to show $x\not\equiv -1\pmod N$.
Now
\[
x \equiv u_j^{2^{s-1}q d_j}=u_j^{2^{s-1}q\cdot 2^v d_j'} = u_j^{2^{s+v-1}q d_j'} \pmod{p_j^{e_j}}.
\]
We claim that $x$ is $1$ modulo $p_j^{e_j}$.  Indeed, since $a^{2^s q}\equiv 1\pmod{p_j^{e_j}}$ we have
\[
u_j^{2^s q d_j}=u_j^{2^{s+v}q d_j'}\equiv 1\pmod{p_j^{e_j}}.
\]
Thus the order $2^{c_j}p_j'$ divides $2^{s+v}q d_j'$.  As $q d_j'$ is odd, this implies
\[
s+v\ge c_j
\quad\text{and}\quad
p_j' \mid q d_j'.
\]
Since $s\ge c_i\ge c_j$ (by \eqref{decreciente}) and $v>0$, we have $s+v-1\ge c_j$, hence $2^{c_j}p_j'$ divides
$2^{s+v-1}q d_j'$, which is exactly the condition that
\[
u_j^{2^{s+v-1}q d_j'}\equiv 1\pmod{p_j^{e_j}}.
\]
Therefore $x\equiv 1\pmod{p_j^{e_j}}$.

But this implies that $x\not\equiv -1\pmod N$, for otherwise
 $x\equiv -1 \pmod{p_j^{e_j}}$ also holds. Subtracting the two congruences would give
$2\equiv 0 \pmod{p_j^{e_j}}$, a contradiction because $p_j$ is odd.

\end{proof}

\medskip
\begin{lemma}\label{lem:nt:repetitions}
Let $N$ be as in \eqref{eq:nt:N-factor}, and let $M=\lfloor \log_2 N\rfloor+1$.
For any $a\in\mathbb{Z}_N^\ast$, define the list of elements
\[
S(a)=\{\,a^{2^t}\bmod N:\ t=0,\dots,M\,\}\ \cup\ \{\,a^{-2^t}\bmod N:\ t=0,\dots,M\,\}
\subset \mathbb{Z}_N^\ast.
\]

If there is a repetition in $S(a)$, then (from that repetition) one can determine an odd integer $q$
and the least $s\ge 0$ such that $a^{2^s q}\equiv 1\pmod N$ using at most $O(\log_2 N)$ deterministic steps.

Moreover, if $a$ is chosen uniformly at random from $\mathbb{Z}_N^\ast$, then with probability
\[
p(m)=1-\frac{m+1}{2^m},
\]
the element $x=a^{2^{s-1}q}$ (when $s>0$) satisfies \eqref{eq. congr property}.
\end{lemma}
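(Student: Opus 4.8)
The statement has a deterministic half (extract $q$ and $s$ from a repetition) and a probabilistic half (estimate the success probability), and the plan is to dispatch them in that order, feeding the output of the first into Proposition~\ref{prop:nt:nontrivial-sqrt} for the second. First I would turn a repetition into a usable relation: a repetition means two distinct pairs $(\varepsilon_1,t_1)\neq(\varepsilon_2,t_2)$ with $\varepsilon_i\in\{\pm1\}$ satisfy $a^{\varepsilon_1 2^{t_1}}\equiv a^{\varepsilon_2 2^{t_2}}\pmod N$, so that $D=|\varepsilon_1 2^{t_1}-\varepsilon_2 2^{t_2}|$ obeys $1\le D\le 2^{M+1}$ and $a^{D}\equiv 1\pmod N$, whence $r=\ord_N(a)$ divides $D$. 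Writing $D=2^{k}q$ with $q$ odd (strip the factors of $2$), I would then locate $s$ by computing $y_0=a^{q}\bmod N$ with fast exponentiation and squaring repeatedly, $y_{\ell+1}=y_\ell^{2}$, until the value $1$ first appears; since $y_k=a^{D}\equiv1$ this occurs after at most $k\le M+1$ squarings, so the whole procedure uses $O(\log_2 N)$ modular operations on $O(\log_2 N)$-bit integers. The observation making this $s$ the correct one is that $r\mid D$ forces the odd part of $r$ to divide $q$, whence $a^{2^{s}q}\equiv1\iff r\mid 2^{s}q\iff s\ge v_2(r)$; thus the least such $s$ equals $v_2(r)$, and the extracted pair $(q,s)$ is exactly of the type described in Proposition~\ref{prop:nt:nontrivial-sqrt}.

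For the probability I would pass to the CRT coordinates. Under the isomorphism \eqref{eq:nt:gN}, a uniform $a\in\mathbb Z_N^\ast$ has independent, uniform exponents $d_i$ modulo $\varphi(p_i^{e_i})=2^{c_i}p_i'$, and since each modulus is even the parity bits $X_i=[\,d_i\text{ odd}\,]$ are independent fair coin flips. Proposition~\ref{prop:nt:nontrivial-sqrt} guarantees \eqref{eq. congr property} (in particular $s>0$) whenever there exist indices $i<j$ with $d_i$ odd and $d_j$ even, that is, whenever the binary string $X_1X_2\cdots X_m$ has a $1$ somewhere to the left of a $0$. I would then compute this probability through its complement: the strings with no $1$ left of a $0$ are precisely the monotone strings $0^{a}1^{m-a}$ for $a=0,1,\dots,m$, exactly $m+1$ of them, each of probability $2^{-m}$; hence the complement has probability $(m+1)/2^{m}$ and the favorable event has probability $p(m)=1-(m+1)/2^{m}$. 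Since that event forces $x=a^{2^{s-1}q}$ to satisfy \eqref{eq. congr property}, the claim follows with $p(m)$ understood as a lower bound on the success probability.

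I expect the complement count to be the clean, decisive step, and the one genuine subtlety to be verifying that the pair $(q,s)$ coming from an \emph{arbitrary} repetition is compatible with Proposition~\ref{prop:nt:nontrivial-sqrt} — this is precisely where $r\mid D$ and ``the odd part of $r$ divides $q$'' are used. I would also note explicitly that $p(m)$ is only a lower bound, since the ordered condition $i<j$ in that proposition is sufficient but not necessary for $x$ to be a nontrivial square root of $1$.
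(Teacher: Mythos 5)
Your proposal is correct and follows essentially the same route as the paper: a repetition yields $a^{D}\equiv 1\pmod N$ with $D=2^{k}q$ and $q$ odd, the least $s$ is located in $O(\log_2 N)$ modular operations, and the failure event is identified with the $m+1$ monotone parity vectors in $\{0,1\}^m$, each of probability $2^{-m}$. Your explicit verification that the extracted pair $(q,s)$ gives the same $s$ (the $2$-adic valuation of $\ord_N(a)$) as in Proposition~\ref{prop:nt:nontrivial-sqrt}, and your observation that $p(m)$ should be read as a lower bound, are both correct refinements of points the paper leaves implicit.
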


\begin{proof}
A repetition in $S(a)$ means that for some $0\le t'<t\le M$ and some choice of signs,
\[
a^{2^t}\equiv a^{\pm 2^{t'}}\pmod N,
\]
hence
\[
a^{2^t\pm 2^{t'}}\equiv 1\pmod N.
\]
Factor the exponent as
\[
2^t\pm 2^{t'} = 2^{t'}(2^{t-t'}\pm 1)=2^{t'}q,
\]
where $q=2^{t-t'}\pm 1$ is odd.  From $q$ one can determine the least $s$ such that
$a^{2^s q}\equiv 1\pmod N$ by repeatedly dividing by $2$ when possible, which takes $O(\log_2 N)$ checks.

For the probability bound, write $g_N(a)=(u_1^{d_1},\dots,u_m^{d_m})$.
For each $i$, the exponent $d_i$ is uniformly distributed modulo $2$ (half even, half odd),
and CRT makes the parity vector $(d_1\bmod 2,\dots,d_m\bmod 2)$ uniform on $\{0,1\}^m$.

By Proposition~\ref{prop:nt:nontrivial-sqrt}, success occurs if there exists $i<j$ with $d_i$ odd and $d_j$ even.
Failure means there is \emph{no} such pair, i.e.\ there do not exist indices $i<j$ with
$d_i$ odd and $d_j$ even. Equivalently, the parity vector
\[
(d_1\bmod 2,\dots,d_m\bmod 2)\in\{0,1\}^m
\]
is \emph{nondecreasing} (once a $1$ appears, all later entries are $1$). Hence it must be of the form
\[
(\underbrace{0,\dots,0}_{t\ \text{zeros}},\underbrace{1,\dots,1}_{m-t\ \text{ones}})
\qquad\text{for some }t\in\{0,1,\dots,m\},
\]
giving exactly $m+1$ possibilities. Each possibility occurs with probability $2^{-m}$, so the failure probability is
$(m+1)/2^{m}$ and the success probability is $1-(m+1)/2^m$.
\end{proof}

\begin{remark}\label{rem:nt:amplification}
Since $N$ is neither prime nor a prime power, we have $m\ge 2$.
Hence $p(2)=1-\frac{3}{4}=\frac14,$
and the function $p(m)=1-\frac{m+1}{2^m}$ is strictly increasing for $m\ge 2$ with
$p(m)\to 1$ as $m\to\infty$.
In particular, without knowing $m$ a priori, a single random choice of $a\in\Z_N^\ast$
produces an element $x=a^{2^{s-1}q}$ (when $s>0$) with the desired property \eqref{eq. congr property}
with probability at least $p(2)=\frac14$.
Equivalently, the failure probability satisfies
\[
\Pr[\text{failure in one trial}]=1-p(m)=\frac{m+1}{2^m}\le \frac34.
\]

If we repeat the construction independently $k$ times (fresh random choices of $a$),
then the probability that none of the resulting values $x$ satisfies the two conditions is
\[
(1-p(m))^k \le \left(\frac34\right)^k.
\]
For highly composite $N$ (large $m$), the quantity $1-p(m)=(m+1)2^{-m}$ is much smaller than $3/4$,
so the failure probability decays substantially faster than $(3/4)^k$.
\end{remark}

\medskip
\begin{lemma}\label{lem:nt:cyclic-order}
Let $G$ be a finite cyclic group of even order $n=2^c m$ with $c>0$ and $m$ odd, and let $u$ be a generator.
Then for $1\le d\le n$ one has
\[
\operatorname{ord}_G(u^d)=\frac{n}{\gcd(n,d)}.
\]
\end{lemma}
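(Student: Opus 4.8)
The plan is to reduce the claim to a divisibility statement about integers and then read off the answer with a single $\gcd$ manipulation. First I would recall the defining property of order in a cyclic group: since $u$ generates $G$ and $\operatorname{ord}_G(u)=n$, for any integer $k$ one has $u^{k}=e$ if and only if $n\mid k$. Applying this to powers of $u^{d}$, the element $(u^{d})^{k}=u^{dk}$ is the identity precisely when $n\mid dk$. Thus $\operatorname{ord}_G(u^{d})$ is, by definition, the least positive integer $k$ with $n\mid dk$, and the problem becomes the purely arithmetic task of computing that least $k$.

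Next I would set $g=\gcd(n,d)$ and write $n=g\,n'$, $d=g\,d'$ with $\gcd(n',d')=1$. Then the condition $n\mid dk$ becomes $g n'\mid g d'k$, equivalently $n'\mid d'k$. This is the one step carrying any content: because $n'$ and $d'$ are coprime, Euclid's lemma yields $n'\mid d'k\iff n'\mid k$. Hence the least positive admissible $k$ is $k=n'=n/g=n/\gcd(n,d)$, which is exactly the asserted value of $\operatorname{ord}_G(u^{d})$.

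I do not expect a genuine obstacle, since the statement is entirely standard; the only point requiring a moment's care is the passage from $n'\mid d'k$ to $n'\mid k$ via coprimality. I would also note that the hypotheses that $n$ is even and that $n=2^{c}m$ with $c>0$ and $m$ odd are not used in this computation — the identity $\operatorname{ord}_G(u^{d})=n/\gcd(n,d)$ holds in \emph{every} finite cyclic group of order $n$ — and are stated here only because the dyadic factorization of $n$ is what the subsequent applications invoke.
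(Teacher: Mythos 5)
Your argument is correct and complete: reducing $\operatorname{ord}_G(u^d)$ to the least $k$ with $n\mid dk$ and invoking coprimality of $n'=n/\gcd(n,d)$ and $d'=d/\gcd(n,d)$ is exactly the standard computation. The paper itself gives no details, simply remarking that $u^d$ generates the subgroup of index $\gcd(n,d)$, so your write-up is just a fully spelled-out version of the same standard fact; your observation that the dyadic hypotheses on $n$ are not needed for this identity is also accurate.
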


\begin{proof}
This is standard since $u^d$ generates the subgroup of index $\gcd(n,d)$.
\end{proof}

\medskip
\begin{lemma}\label{lem:nt:oddify}
For any $a\in\mathbb{Z}_N^\ast$, the largest power of $2$ dividing $\operatorname{ord}_N(a)$ is strictly less than $\log_2 N$.
In particular, if $M=\lfloor\log_2 N\rfloor+1$ and $b=a^{2^M}$, then $\operatorname{ord}_N(b)$ is odd.
\end{lemma}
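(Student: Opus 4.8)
The plan is to pass through the CRT isomorphism \eqref{eq:nt:gN}, bound the $2$-adic valuation of $\operatorname{ord}_N(a)$ one cyclic factor at a time, and then convert that valuation bound into the oddness statement using the order-of-a-power identity of Lemma \ref{lem:nt:cyclic-order}. Throughout I write $v_2(\cdot)$ for the exponent of $2$ in the prime factorization of a positive integer, so that the first assertion is precisely the claim that $v_2(\operatorname{ord}_N(a))<\log_2 N$.

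The core of the argument is to bound $v_2(\operatorname{ord}_N(a))$ factor by factor. Since the order of an element in a direct product is the lcm of the orders of its components, writing $g_N(a)=(a_1,\dots,a_m)$ gives $\operatorname{ord}_N(a)=\lcm_{i}\operatorname{ord}_{p_i^{e_i}}(a_i)$, and as the $2$-adic valuation of an lcm is the maximum of the valuations,
\[
v_2\bigl(\operatorname{ord}_N(a)\bigr)=\max_{1\le i\le m} v_2\bigl(\operatorname{ord}_{p_i^{e_i}}(a_i)\bigr).
\]
Each $\mathbb{Z}_{p_i^{e_i}}^\ast$ is cyclic of order $\varphi(p_i^{e_i})=2^{c_i}p_i'$ with $p_i'$ odd, so $\operatorname{ord}_{p_i^{e_i}}(a_i)$ divides $2^{c_i}p_i'$ and hence $v_2(\operatorname{ord}_{p_i^{e_i}}(a_i))\le c_i$. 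A crude size bound then controls each $c_i$: since $\varphi(p_i^{e_i})=p_i^{e_i-1}(p_i-1)<p_i^{e_i}\le N$,
\[
2^{c_i}\le 2^{c_i}p_i'=\varphi(p_i^{e_i})<N,
\]
so $c_i<\log_2 N$ for every $i$. Combining the two displays yields $v_2(\operatorname{ord}_N(a))\le\max_i c_i<\log_2 N$, which is the first assertion.

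For the ``in particular'', set $k=v_2(\operatorname{ord}_N(a))$ and write $\operatorname{ord}_N(a)=2^k s$ with $s$ odd. Working inside the cyclic subgroup $\langle a\rangle$, the order-of-a-power formula of Lemma \ref{lem:nt:cyclic-order} (which holds for all orders, even or odd) applied with $d=2^M$ gives
\[
\operatorname{ord}_N(b)=\operatorname{ord}_N\bigl(a^{2^M}\bigr)=\frac{\operatorname{ord}_N(a)}{\gcd\bigl(\operatorname{ord}_N(a),2^M\bigr)}=\frac{2^k s}{2^{\min(k,M)}}.
\]
Because $M=\lfloor\log_2 N\rfloor+1>\log_2 N>k$, we have $\min(k,M)=k$, so $\operatorname{ord}_N(b)=s$ is odd, as claimed.

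I do not anticipate a genuine obstacle: the lemma is elementary once the reduction to the cyclic factors is in place. The only points demanding care are purely bookkeeping --- correctly identifying $v_2$ of an lcm with the maximum of the component valuations, and tracking the strict chain $k<\log_2 N<M$ so that the entire $2$-part of the order cancels when raising $a$ to $2^M$. Note in particular that the size bound is genuinely on the \emph{exponent} $c_i$ and not on $2^{c_i}$, which may well exceed $\log_2 N$ (for instance when some $p_i-1$ carries a large power of $2$).
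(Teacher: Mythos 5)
Your proof is correct and follows essentially the same route as the paper: CRT reduction to the cyclic factors, bounding the $2$-adic valuation by $\max_i c_i < \log_2 N$ via $2^{c_i}\mid\varphi(p_i^{e_i})<p_i^{e_i}\le N$, and then cancelling the entire $2$-part by raising to $2^M$. The only cosmetic difference is that you make the final step explicit through the order-of-a-power formula of Lemma~\ref{lem:nt:cyclic-order} (correctly noting it needs no parity hypothesis), where the paper simply observes that $2^M$ absorbs the $2$-part of $\operatorname{ord}_N(a)$.
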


\begin{proof}
Write $g_N(a)=(a_1,\dots,a_m)$ with $a_i\in\mathbb{Z}_{p_i^{e_i}}^\ast$.
Then
\[
\operatorname{ord}_N(a)=\operatorname{lcm}\bigl(\operatorname{ord}_{p_1^{e_1}}(a_1),\dots,\operatorname{ord}_{p_m^{e_m}}(a_m)\bigr).
\]
Each $\operatorname{ord}_{p_i^{e_i}}(a_i)$ divides $\varphi(p_i^{e_i})=2^{c_i}p_i'$, hence the $2$-adic valuation of
$\operatorname{ord}_N(a)$ is at most $\max_i c_i$.

Finally, $2^{c_i}\mid \varphi(p_i^{e_i})<p_i^{e_i}\le N$, so $c_i<\log_2 N$ for all $i$, hence
$\max_i c_i<\log_2 N$.  Taking $M=\lfloor\log_2 N\rfloor+1$ makes $2^M$ divisible by the entire $2$-part of $\operatorname{ord}_N(a)$,
so $b=a^{2^M}$ has odd order.
\end{proof}

\medskip
\begin{proposition}\label{prop:nt:factoring-to-order}
Set $M=\lfloor\log_2 N\rfloor+1$.  For any $a\in\mathbb{Z}_N^\ast$, let $b=a^{2^M}$ and write
\[
r_b=\operatorname{ord}_N(b),
\]
which is odd by Lemma~\ref{lem:nt:oddify}.  If $r_b$ is known, then $\operatorname{ord}_N(a)$ can be computed in at most
$O(\log_2 N)$ deterministic steps.

Moreover, if $a$ is chosen uniformly at random from $\mathbb{Z}_N^\ast$, then with probability at least
$p(m)=1-(m+1)/2^m$ the order $r_a=\operatorname{ord}_N(a)$ is even and the element $x=a^{r_a/2}$ satisfies \eqref{eq. congr property}.
\end{proposition}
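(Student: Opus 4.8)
The plan is to prove the two assertions separately, reducing each to the lemmas already established in this section. Throughout, write $r_a=\operatorname{ord}_N(a)=2^{s}q_a$ with $q_a$ odd, so that $s=v_2(r_a)$ is the $2$-adic valuation of $r_a$; the key preliminary observation, which links the two claims, is that $r_b$ is exactly the odd part $q_a$ of $r_a$.

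For the first (computational) claim, I would begin by making that observation precise. Since $a$ generates a cyclic group of order $r_a$, the standard order formula (Lemma~\ref{lem:nt:cyclic-order}, which holds for any order) gives
\[
r_b=\operatorname{ord}_N(a^{2^M})=\frac{r_a}{\gcd(r_a,2^M)}.
\]
By Lemma~\ref{lem:nt:oddify} we have $s=v_2(r_a)<\log_2 N\le M$, so $\gcd(r_a,2^M)=2^{s}$ and hence $r_b=q_a$. It then remains only to recover the exponent $s$. Knowing $r_b$, one first computes $a^{r_b}\bmod N$ by fast exponentiation ($O(\log_2 N)$ modular squarings) and then forms the successive squares $a^{r_b},a^{2r_b},a^{4r_b},\dots$; because $a^{2^{j}r_b}=a^{2^{j}q_a}\equiv 1\pmod N$ holds precisely when $j\ge s$, the least $j$ achieving equality is $s$, and $r_a=2^{s}r_b$. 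This costs at most $M+1=O(\log_2 N)$ further modular squarings and equality tests, establishing the first claim.

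For the probabilistic claim, I would invoke Proposition~\ref{prop:nt:nontrivial-sqrt} directly, after verifying that its output $x=a^{2^{s-1}q}$ is precisely $a^{r_a/2}$. In that proposition $s$ is the least integer admitting an odd witness $q$ with $a^{2^{s}q}\equiv 1$; since $a^{2^{s}q}\equiv 1$ forces $r_a\mid 2^{s}q$, this least $s$ equals $v_2(r_a)$ and one may take $q=q_a=r_b$, so that $2^{s-1}q=r_a/2$. Any other admissible odd $q$ is an odd multiple of $q_a$, and raising the square root $a^{r_a/2}$ (which squares to $1$) to an odd power leaves it fixed, so $x$ is unambiguously $a^{r_a/2}$. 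Consequently, whenever the hypothesis of Proposition~\ref{prop:nt:nontrivial-sqrt} holds---that there exist indices $i<j$ with $d_i$ odd and $d_j$ even in $g_N(a)=(u_1^{d_1},\dots,u_m^{d_m})$---we obtain $s>0$, i.e.\ $r_a$ is even, and $x=a^{r_a/2}$ satisfies \eqref{eq. congr property}.

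To finish, I would quantify this event exactly as in the proof of Lemma~\ref{lem:nt:repetitions}. For $a$ uniform in $\mathbb{Z}_N^\ast$, the CRT isomorphism $g_N$ makes the components independent and uniform in each cyclic factor $\mathbb{Z}_{p_i^{e_i}}^\ast$; as each factor has even order $2^{c_i}p_i'$ with $c_i\ge 1$, the parity $d_i\bmod 2$ is uniform, so the parity vector is uniform on $\{0,1\}^m$. The failure event---no pair $i<j$ with $d_i$ odd and $d_j$ even---is exactly the set of nondecreasing parity vectors, of which there are $m+1$, giving failure probability $(m+1)/2^m$ and hence success probability at least $p(m)=1-(m+1)/2^m$. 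The one step demanding genuine care is the identification $x=a^{r_a/2}=a^{2^{s-1}q}$ together with the matching of the two notions of $s$; once this bookkeeping is pinned down, both claims follow by assembly of the earlier results.
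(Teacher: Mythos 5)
Your proposal is correct and follows essentially the same route as the paper's (much terser) proof: the first claim is the same successive-squaring search for the least $k$ with $a^{2^k r_b}\equiv 1\pmod N$, and the second claim is the same combination of Proposition~\ref{prop:nt:nontrivial-sqrt} with the parity-vector count from Lemma~\ref{lem:nt:repetitions}. The one piece of genuine added value in your write-up is the explicit verification that the $x=a^{2^{s-1}q}$ of Proposition~\ref{prop:nt:nontrivial-sqrt} coincides with $a^{r_a/2}$ independently of the admissible odd $q$ chosen, a bookkeeping step the paper leaves implicit.
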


\begin{proof}
Since $b=a^{2^M}$ and $r_b$ is odd, the order of $a$ has the form $r_a=2^k r_b$ for some $0\le k\le M$.
One finds the least such $k$ by testing $a^{2^k r_b}\equiv 1\pmod N$ for $k=0,1,\dots,M$, which costs $O(\log_2 N)$
deterministic steps (repeated squaring / modular exponentiation).

For the probability statement, apply Lemma~\ref{lem:nt:repetitions} and Proposition~\ref{prop:nt:nontrivial-sqrt}.
\end{proof}

\section{Diffusion order finding}
\label{4}

This section contains the analytic core: an order--finding primitive based on iterating a
\emph{half--lazy walk operator} on the Cayley graph of the cyclic subgroup generated by $b$.
The method is \emph{local}: one never constructs the full Cayley graph.
A single application of the walk operator at a point $x\in \langle b \rangle$ only requires evaluating the neighbors
\[
  x \longmapsto x\,b^{\pm 2^t}\pmod N \qquad (0\le t\le M),
\]
which can be executed by modular multiplication (and modular inversion).
\smallskip

\noindent\textbf{The underlying Cayley graph and the random walk matrix.}
Let
\[
  M=\lfloor\log_2 N\rfloor+1,
  \qquad
  G=\langle b\rangle\subseteq (\Z/N\Z)^\ast.
\]
Consider the (unweighted) Cayley graph $X=X_{N,b}=\mathrm{Cay}(G,S)$ with generating multiset
\[
  S=\{\,b^{\pm 2^t}: 0\le t\le M\,\}.
\]
Equivalently, $X$ is the weighted graph with vertex set $G$ and weights
\[
  w(x,y)=\#\bigl\{\,t\in\{0,\dots,M\}: y=x\,b^{2^t}\ \text{or}\ y=x\,b^{-2^t}\,\bigr\}.
\]
Then $X$ is $d$--regular of degree $d=2(M+1)$, with adjacency matrix $A_X$ given by
\[
  (A_X p)(x)=\sum_{y\in G} w(x,y)\,p(y).
\]
As in Section~\ref{subsec:random-walks}, the (simple) random-walk matrix on $X$ is
\[
  P=\frac{1}{d}A_X.
\]
In particular, for functions $p\colon G\to\R$ we have the concrete formula
\begin{equation}\label{eq:A-def-mult}
  (Pp)(x)= \frac{1}{2(M+1)}\sum_{t=0}^M
  \Bigl(p(x\,b^{2^t})+p(x\,b^{-2^t})\Bigr),
  \qquad x\in G.
\end{equation}

Fix an ordering $G=\{x_1,\dots,x_{|G|}\}$ and identify $p:G\to\R$ with the column vector
$$(p(x_1),\dots,p(x_{|G|}))^{\mathsf T}\in\R^{|G|}.$$
As before, define the associated half-lazy walk operator
\[
  W=W_{N,b}=\frac12(I+P).
\]

\subsection{The diffusion theorem}

\begin{theorem}[Diffusion order finding]\label{thm:main}
Let $N>1$ be an integer.
Let $b\in(\Z/N\Z)^\ast$ have multiplicative order $r=\ord_N(b).$
Set
\[
  M=\lfloor\log_2 N\rfloor+1,
  \qquad
  G=\langle b\rangle\subseteq (\Z/N\Z)^\ast,
  \qquad |G|=r.
\]
 Let $\{p_n\}_{n\geq 0}$ be a half-lazy walk on $G$ with  transition matrix $W$ and the initial state being the delta function at the identity,
 meaning
\[
  p_0=\delta_e,
  \qquad
  p_n= W^n p_0, \qquad (n\ge 0).
\]
\smallskip
\noindent Then, for every
\begin{equation}\label{eq:diffusion_bound}
  n \;\ge\; 4(M+1)\bigl(\log_2 N + 2\bigr),
\end{equation}
the single readout value $p_n(e)$ determines $r$ uniquely as the unique integer in $(0,N]$
whose reciprocal lies within $1/(4N^2)$ of $p_n(e)$.
\end{theorem}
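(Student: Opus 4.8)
The plan is to diagonalize $W$ by the characters of the cyclic group $G=\langle b\rangle\cong\Z/r\Z$ and collapse the entire statement into a single scalar spectral sum. The characters are $\chi_j(b^\ell)=e^{2\pi i j\ell/r}$ for $0\le j\le r-1$, and by the Cayley-graph spectral formula of Section~\ref{sec:prelim} each $\chi_j$ is an eigenvector of $W$ with eigenvalue
\[
\lambda_j=\tfrac12\Bigl(1+\tfrac{1}{M+1}\sum_{t=0}^{M}\cos\!\bigl(2\pi j 2^t/r\bigr)\Bigr)\in[0,1].
\]
Taking the orthonormal eigenbasis $\psi_j=r^{-1/2}\chi_j$ and expanding $p_0=\delta_e$, I would first establish the diagonal heat-kernel identity $p_n(e)=\frac1r\sum_{j=0}^{r-1}\lambda_j^n$ (using $\chi_j(e)=1$), which isolates the constant eigenvector and gives $p_n(e)-\frac1r=\frac1r\sum_{j=1}^{r-1}\lambda_j^n\ge 0$. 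Everything then reduces to an upper bound on $\frac1r\sum_{j\ge1}\lambda_j^n$.

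The heart of the argument is a \emph{uniform spectral gap}: I claim $\lambda_j\le 1-\frac{1}{2(M+1)}$ for every $j\in\{1,\dots,r-1\}$. Writing $x_t=2^t j/r$ and letting $\|\cdot\|$ denote distance to the nearest integer, it suffices to produce a single index $t^*\in\{0,\dots,M\}$ with $\|x_{t^*}\|>\tfrac14$, for then $\cos(2\pi x_{t^*})=\cos(2\pi\|x_{t^*}\|)<0$ gives $1-\cos(2\pi x_{t^*})>1$, and that one term already forces $\frac{1}{M+1}\sum_t\bigl(1-\cos(2\pi x_t)\bigr)>\frac{1}{M+1}$, i.e.\ $\lambda_j<1-\frac{1}{2(M+1)}$. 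This is exactly where the doubling structure of the lacunary sequence enters. Since $x_{t+1}\equiv 2x_t\pmod 1$, as long as $\|x_t\|\le\tfrac14$ one has \emph{exactly} $\|x_{t+1}\|=2\|x_t\|$; hence if all of $\|x_0\|,\dots,\|x_M\|$ were $\le\tfrac14$, induction would give $\|x_M\|=2^M\|x_0\|\ge 2^M/r>1$, using $\|x_0\|=\|j/r\|\ge 1/r$ and $2^M>N>r$, contradicting $\|x_M\|\le\tfrac12$. Thus a crossing index $t^*$ with $\|x_{t^*}\|\in(\tfrac14,\tfrac12]$ must exist.

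Combining the gap with the heat-kernel identity yields
\[
\Bigl|p_n(e)-\tfrac1r\Bigr|<\Bigl(1-\tfrac{1}{2(M+1)}\Bigr)^{n}\le e^{-n/(2(M+1))}.
\]
For $n\ge 4(M+1)(\log_2 N+2)$ the exponent is at least $2\log_2 N+4$, so $|p_n(e)-\frac1r|<e^{-4}\,e^{-2\log_2 N}<e^{-4}N^{-2}<\frac{1}{4N^2}$ (using $e^{-2\log_2 N}<2^{-2\log_2 N}=N^{-2}$ and $e^{-4}<\tfrac14$). For the uniqueness of the rounding I would then note that any integer $\rho\in(0,N]$ with $\rho\ne r$ satisfies $|1/\rho-1/r|=|r-\rho|/(r\rho)\ge 1/N^2$, whence $|p_n(e)-1/\rho|\ge 1/N^2-1/(4N^2)=3/(4N^2)>1/(4N^2)$; so $r$ is the unique integer in $(0,N]$ whose reciprocal lies within $1/(4N^2)$ of $p_n(e)$.

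The step I expect to be the main obstacle is pinning down the gap with a \emph{sharp enough constant}. The naive inequality $1-\cos(2\pi x)\ge 8\|x\|^2$ only produces a gap of order $1/(4(M+1))$, which is too weak to reach $1/(4N^2)$ within the stated $n=4(M+1)(\log_2 N+2)$. The fix is to exploit the full strength of the crossing, namely $\|x_{t^*}\|>\tfrac14$ and hence $1-\cos\ge 1$ at a single coordinate, rather than a quadratic lower bound; and to verify carefully that the doubling identity $\|x_{t+1}\|=2\|x_t\|$ really holds throughout the run below $\tfrac14$ and that $2^M>r$ (guaranteed by $r\le\varphi(N)<N<2^M$), so that the crossing index $t^*$ is forced to lie in $\{0,\dots,M\}$.
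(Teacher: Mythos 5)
Your proposal is correct and follows essentially the same route as the paper's proof: character diagonalization giving $p_n(e)=\frac1r\sum_k\lambda_k^n$, a doubling argument for the lacunary sequence $2^t j/r$ forcing one nonpositive cosine (your $\|x_t\|$ is exactly the paper's $b_t/r$ in Lemma~\ref{lem:middle-half}), the resulting gap $\lambda_k\le 1-\frac{1}{2(M+1)}$, and the $1/N^2$ spacing of reciprocals for uniqueness. The "main obstacle" you flag is handled the same way in the paper, namely by using the full $1-\cos\ge 1$ at the crossing index rather than a quadratic lower bound.
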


Theorem \ref{thm:main} will be proved in subsection \ref{subsec. proof of main thm}.

\begin{corollary}\label{cor:rounding-rule}
Under the hypotheses of Theorem~\ref{thm:main}, if $|p_n(e)-1/r|\le 1/(4N^2)$ then
\[
\operatorname{round}\!\bigl(1/p_n(e)\bigr)=r.
\]
\end{corollary}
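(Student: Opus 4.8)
The plan is to treat $p_n(e)$ as a small additive perturbation of $1/r$ and to propagate the error through the reciprocal map $x\mapsto 1/x$. Writing $\epsilon=p_n(e)-1/r$, the hypothesis gives $|\epsilon|\le 1/(4N^2)$, and the goal reduces to showing that $1/p_n(e)$ lies within distance $1/2$ of the integer $r$, since then $\operatorname{round}\!\bigl(1/p_n(e)\bigr)=r$ by the definition of nearest-integer rounding.

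The key arithmetic input is that $r=\ord_N(b)$ divides $\varphi(N)\le N-1$, so $r<N$ and in particular $r\le N$. First I would record that $p_n(e)>0$, since $p_n(e)\ge 1/r-1/(4N^2)\ge 1/N-1/(4N^2)>0$, so that $1/p_n(e)$ is well-defined. Next I would control the relative error: substituting $p_n(e)=1/r+\epsilon=(1+r\epsilon)/r$ gives
\[
  \frac{1}{p_n(e)}=\frac{r}{1+r\epsilon},
  \qquad\text{hence}\qquad
  \Bigl|\tfrac{1}{p_n(e)}-r\Bigr|=\frac{r^2|\epsilon|}{|1+r\epsilon|}.
\]
Using $r\le N$ and $|\epsilon|\le 1/(4N^2)$ yields $|r\epsilon|\le 1/(4N)\le 1/4$, so the denominator satisfies $|1+r\epsilon|\ge 3/4>0$. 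Feeding these into the displayed identity gives
\[
  \Bigl|\tfrac{1}{p_n(e)}-r\Bigr|
  \le \frac{r^2|\epsilon|}{3/4}
  \le \frac{4}{3}\cdot N^2\cdot\frac{1}{4N^2}
  = \frac13<\frac12,
\]
so $r$ is the unique integer nearest to $1/p_n(e)$ and the rounding rule follows.

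Since the computation is elementary, there is no serious obstacle; the only point requiring genuine care is controlling the denominator $1+r\epsilon$ away from zero before inverting, which is where the explicit factor $1/(4N^2)$ (rather than, say, $1/N^2$) earns its keep: it simultaneously guarantees positivity of $p_n(e)$ and the slack needed to land strictly inside the rounding window $(r-\tfrac12,\,r+\tfrac12)$. I would also note that this corollary is essentially a clean restatement of the uniqueness already embedded in Theorem~\ref{thm:main}; the substantive analytic work is the estimate $|p_n(e)-1/r|\le 1/(4N^2)$ itself, which is supplied by the theorem and is taken here as the hypothesis.
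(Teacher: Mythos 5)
Your proof is correct and is essentially the same argument as the paper's: both bound $r\le N$, use the hypothesis to show $p_n(e)$ is bounded below (your $|1+r\epsilon|\ge 3/4$ is just the paper's $p_n(e)>3/(4N)$ in disguise), and conclude $\bigl|1/p_n(e)-r\bigr|\le 1/3<1/2$. No substantive difference.
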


\begin{proof}
Since $r\le N$, we have
\[
p_n(e)\ge \frac{1}{r}-\frac{1}{4N^2}\ge \frac{1}{N}-\frac{1}{4N^2}=\frac{4N-1}{4N^2}>\frac{3}{4N}.
\]
Therefore
\[
\left|\frac{1}{p_n(e)}-r\right|
=\left|\frac{r(1/r-p_n(e))}{p_n(e)}\right|
\le \frac{N\cdot (1/(4N^2))}{3/(4N)}=\frac13<\frac12,
\]
which forces rounding to equal $r$.
\end{proof}
\begin{remark}\label{rem:scope-diffusion-thm}
Theorem~\ref{thm:main} is stated in full generality. It only concerns diffusion on the cyclic subgroup
$G=\langle b\rangle$ of order $r=\ord_N(b)$, and its proof does not use any special arithmetic
hypotheses on $N$ beyond the explicit inequalities stated.
In the remainder of this paper, however, we will invoke Theorem~\ref{thm:main} only in the
factoring setting, where $N$ is assumed to be an odd integer that is neither prime nor a prime
power, and where we choose $b$ so that $r=\ord_N(b)$ is odd.
\end{remark}
\medskip\noindent
Next, we will state and prove a \emph{doubling lemma} we will need in the proof of
Theorem~\ref{thm:main} in the next section. Conceptually, this is a lacunary-series phenomenon: the doubling sequence $2^t$
spreads residues quickly enough that one must encounter an interval where the phase has nonpositive cosine value, reminiscent of the mechanisms
exploited in Korobov-type bounds for lacunary exponential sums, see e.g. \cite{KM12} or \cite{Va19}.

\begin{lemma}[Doubling Lemma]\label{lem:middle-half}
Let $r$ be an integer and let $M\ge 0$ be such that $2^M>r$.
For every integer $k$ with $1\le k\le r-1$, there exists $t\in\{0,1,\dots,M\}$ such that
\[
  k2^t \bmod r \in \Bigl[\frac r4,\frac{3r}{4}\Bigr].
\]
\end{lemma}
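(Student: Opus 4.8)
The plan is to argue by contradiction, exploiting the fact that multiplication by $2$ modulo $r$ acts as an almost-exact geometric doubling as long as the iterates avoid the central band. First I would set $a_t = k2^t \bmod r \in \{0,1,\dots,r-1\}$, so that $a_0 = k$ (since $1\le k\le r-1$) and the sequence obeys the recursion $a_{t+1}\equiv 2a_t \pmod r$. Suppose, for contradiction, that $a_t\notin[r/4,3r/4]$ for every $t\in\{0,\dots,M\}$; then each $a_t$ lies in the ``left'' region $L=[0,r/4)$ or the ``right'' region $R=(3r/4,r)$.

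The key structural step is a case analysis of the doubling map on $L$ and $R$. If $a_t\in L$, then $2a_t\in[0,r/2)$, so no reduction occurs and $a_{t+1}=2a_t$; moreover $a_{t+1}<r/2$ forces $a_{t+1}\notin R$, so a bad successor must again lie in $L$. Symmetrically, if $a_t\in R$, then $2a_t\in(3r/2,2r)$, so exactly one copy of $r$ is subtracted, $a_{t+1}=2a_t-r\in(r/2,r)$, and $a_{t+1}>r/2$ forces $a_{t+1}\notin L$, so a bad successor must again lie in $R$. Hence the orbit cannot cross the band: if $a_0\in L$ then all $a_t\in L$, and if $a_0\in R$ then all $a_t\in R$.

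In the left case the recursion is exact doubling, giving $a_t=2^t k$ for all $t\le M$; taking $t=M$ and using $k\ge 1$ yields $a_M=2^M k\ge 2^M>r$, contradicting $a_M\in L\subseteq[0,r)$. In the right case I would pass to the reflected variable $c_t=r-a_t\in(0,r/4)$, which linearizes the iteration: $c_{t+1}=r-(2a_t-r)=2c_t$, so $c_t=2^t c_0=2^t(r-k)$; since $k\le r-1$ gives $r-k\ge 1$, taking $t=M$ yields $c_M\ge 2^M>r$, contradicting $c_M\in(0,r/4)$. Either way we reach a contradiction, proving the lemma.

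The only place where any care is needed is the region bookkeeping: verifying that no wraparound occurs in $L$, that exactly one subtraction of $r$ occurs in $R$, and that the band $[r/4,3r/4]$ genuinely blocks all $L\leftrightarrow R$ transitions among bad iterates. The reflection $c_t=r-a_t$ is the trick that makes the right region behave identically to the left, so that the single inequality $2^M>r$ closes both cases. One can alternatively phrase the whole argument in terms of the binary digits of $k/r$, where staying outside the band forces $M+2$ consecutive equal bits; but the elementary doubling version avoids the boundary ambiguities of dyadic expansions, which is why I expect it to be the cleaner route.
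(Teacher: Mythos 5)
Your proof is correct and takes essentially the same route as the paper's: the paper tracks the single quantity $b_t=\min(a_t,\,r-a_t)$ and shows it doubles exactly at each step while the orbit avoids the middle band, which is precisely your two cases ($a_t$ in the left region, the reflected variable $r-a_t$ in the right region) packaged into one variable. The final contradiction, $2^M b_0\ge 2^M>r$ versus the bound $b_M<r/4$, is identical to yours.
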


\begin{proof}
For each $t\ge 0$, let $a_t\in\{0,1,\dots,r-1\}$ be the least residue of $k2^t$ modulo~$r$.
Assume for contradiction that for all $t\in\{0,\dots,M\}$,
\[
  a_t\in \Bigl[0,\frac r4\Bigr)\ \cup\ \Bigl(\frac{3r}{4},r\Bigr).
\]
Set
\[
  b_t:=\min(a_t,\ r-a_t).
\]
Then $0<b_t<r/4$ for all $t\in\{0,\dots,M\}$.

We claim that $b_{t+1}=2b_t$ for $t=0,\dots,M-1$.
If $a_t<r/4$, then $2a_t<r/2<r$, so $a_{t+1}=2a_t$ and hence $b_{t+1}=2b_t$.
If $a_t>3r/4$, then $2a_t\in(3r/2,2r)$, so $a_{t+1}=2a_t-r\in(r/2,r)$ and therefore
\[
  b_{t+1}=r-a_{t+1}=r-(2a_t-r)=2(r-a_t)=2b_t.
\]
Thus $b_t=2^t b_0$ for all $t\le M$. Since $b_0=\min(k,r-k)\ge 1$, we have $b_M\ge 2^M$.

By hypothesis $2^M>r$, hence $b_M\ge 2^M>r/4$, contradicting $b_M<r/4$.
This contradiction proves the lemma.
\end{proof}

\medskip\noindent
Recall that for $k\in\{0,1,\dots,r-1\},$ if
\[
  \chi_k(j)=\exp\!\left(\frac{2\pi i k j}{r}\right),
  \qquad j\in\mathbb{Z}/r\mathbb{Z},
\]
then
\[
  \frac{1}{r}\sum_{k=0}^{r-1}\chi_k(j)=
  \begin{cases}
    1, & j\equiv 0\pmod r,\\
    0, & j\not\equiv 0\pmod r.
  \end{cases}
\]

\subsection{Proof of Theorem~\ref{thm:main}}\label{subsec. proof of main thm}

\begin{proof}
We change to additive notation. The subgroup $G=\langle b\rangle$ is cyclic of order $r$. Hence,
\[
  \phi:\mathbb{Z}/r\mathbb{Z}\longrightarrow G,
  \qquad
  \phi(j)=b^j
\]
is an isomorphism. Using $\phi$ we transport functions $p:G\to\R$ to functions
$\widetilde p:\Z/r\Z\to\R$ by $\widetilde p(j)=p(b^j)$. In particular $\widetilde p(0)=p(e)$.
Under this identification, right multiplication by $b^{\pm 2^t}$ becomes translation by $\pm 2^t$ modulo $r$.
Hence the random walk operator $P$ becomes the translation-invariant walk
\[
  (P\widetilde p)(j)
  =\frac{1}{2(M+1)}\sum_{t=0}^M\Bigl(\widetilde p(j+2^t)+\widetilde p(j-2^t)\Bigr),
  \qquad j\in\mathbb{Z}/r\mathbb{Z}.
\]

Now the characters are eigenfunctions of $P$. Since $\chi_k(j\pm a)=\chi_k(j)\chi_k(\pm a)$ one has
\[
  (P\chi_k)(j)
  =\chi_k(j)\cdot \frac{1}{2(M+1)}\sum_{t=0}^M\left(e^{2\pi i k2^t/r}+e^{-2\pi i k2^t/r}\right)
  =\chi_k(j)\cdot \frac{1}{M+1}\sum_{t=0}^M \cos\!\left(\frac{2\pi k2^t}{r}\right).
\]
Thus $P\chi_k=\mu_k\chi_k$ with
\[
  \mu_k=\frac{1}{M+1}\sum_{t=0}^M \cos\!\left(\frac{2\pi k2^t}{r}\right),
\]
and therefore $W\chi_k=\lambda_k\chi_k$ where $\lambda_k=(1+\mu_k)/2$.
As noted before (\ref{lambdas}), $\lambda_0=1$ and $0\le \lambda_k\le 1$ for all $k$.

Let $\delta_0$ be the delta function at $0\in\Z/r\Z$. By orthogonality,
\[
  \delta_0(j)=\frac{1}{r}\sum_{k=0}^{r-1}\chi_k(j).
\]
Applying $W^n$ and evaluating at $0$ gives
\[
  (W^n\delta_0)(0)=\frac{1}{r}\sum_{k=0}^{r-1}\lambda_k^n
  =\frac{1}{r}+\frac{1}{r}\sum_{k=1}^{r-1}\lambda_k^n.
\]
Transporting back to $G$, the left-hand side is exactly $p_n(e)$, hence
\begin{equation}\label{eq:pn-error}
  \left|p_n(e)-\frac{1}{r}\right|
  \le \max_{1\le k\le r-1}\lambda_k^n.
\end{equation}

Fix $k\in\{1,\dots,r-1\}$. By Lemma~\ref{lem:middle-half} there exists $t\in\{0,\dots,M\}$ such that
\[
  k2^t \bmod r \in \Bigl[\frac r4,\frac{3r}{4}\Bigr],
  \qquad\text{hence}\qquad
  \cos\!\left(\frac{2\pi k2^t}{r}\right)\le 0.
\]
All other cosine terms are $\le 1$, so
\[
  \mu_k\le \frac{M}{M+1}=1-\frac{1}{M+1},
  \qquad\text{and therefore}\qquad
  \lambda_k=\frac{1+\mu_k}{2}\le 1-\frac{1}{2(M+1)}.
\]
Combining with \eqref{eq:pn-error} and using $1-x\le \e^{-x}$,
\[
  \left|p_n(e)-\frac{1}{r}\right|
  \le \left(1-\frac{1}{2(M+1)}\right)^n
  \le \exp\!\left(-\frac{n}{2(M+1)}\right).
\]

If $n\ge 4(M+1)(\log_2 N+2)$ then
\[
\exp\!\left(-\frac{n}{2(M+1)}\right)\le \exp\!\bigl(-2(\log_2 N+2)\bigr)
= \e^{-4}\,\e^{-2\log_2 N}.
\]
Since $\log_2 N=\ln N/\ln 2\ge \ln N$, we have $\e^{-2\log_2 N}\le \e^{-2\ln N}=N^{-2}$,
so the right-hand side is $\le \e^{-4}N^{-2}<\frac{1}{4N^2}$. Hence
\[
  \left|p_n(e)-\frac{1}{r}\right|\le \frac{1}{4N^2}.
\]

\medskip\noindent\textbf{Uniqueness of $r$.}  Since $r=\ord_N(b)$ divides $|\Z_N^\ast|=\varphi(N)$ and $N>1$, we have
$\varphi(N)<N,$ hence $r<N$. (In particular $2^M>N>r,$ for $M=\lfloor\log_2 N\rfloor+1.$)
For distinct integers $a\neq b$ in $\{1,\dots,N\}$,
\[
  \left|\frac{1}{a}-\frac{1}{b}\right|
  =\frac{|a-b|}{ab}\ge \frac{1}{N^2}.
\]
Therefore there is at most one integer $r'\in(0,N]$ such that
$\left|p_n(e)-\frac{1}{r'}\right|\le \frac{1}{4N^2}$.
The preceding estimate shows that $r$ satisfies this inequality, hence $r$ is uniquely determined by $p_n(e)$.
\end{proof}

\section{A diffusion-assisted factoring algorithm}
\label{5}

In this section, we describe diffusion assisted factorization algorithm. It receives as an input a positive integer $N$ with $m\geq 2$ odd prime factors. The algorithm returns a divisor $d$ of $N$, with probability at least $p(m)=1-2^{-m}(m+1)$ for each choice of a random integer $a\in\{1,2,\ldots, N-1\}$.

Before we proceed with the pseudocode for the algorithm, let us state the following two remarks.

\begin{remark}\label{rem:algo-comments}
Algorithm~\ref{alg:diffusion-factoring} is probabilistic in the same sense as Shor's factoring algorithm:
a single \emph{trial} (i.e.\ one choice of $a$ and the subsequent steps) may end in a restart
(\texttt{continue} in Algorithm~\ref{alg:diffusion-factoring}; equivalently, \textsf{FAIL} for that particular $a$),
for instance because the extracted square root satisfies $x\equiv \pm 1\pmod N$ and hence yields only a trivial gcd.

However, for composite $N$ with at least two distinct prime factors, each independent trial has a nonzero success
probability depending only on the CRT parity pattern (cf.\ Proposition~\ref{prop:nt:factoring-to-order}
and Lemma~\ref{lem:nt:repetitions}). Consequently, repeating the procedure independently drives the overall
failure probability very close to $0$.

We will leave for elsewhere the problem of optimizing
the probability $p(m)$ of success. In that regard, the methodology of \cite{Za13} seems applicable.
\end{remark}

\medskip

\begin{remark}
\label{rem:primality-primepower}
In practice one can efficiently exclude the cases ``$N$ is prime'' and ``$N$ is a prime power''
before invoking Algorithm~\ref{alg:diffusion-factoring}. For primality testing there are fast randomized tests
(e.g.\ Miller--Rabin), deterministic polynomial-time algorithms (AKS), and practical certificate-based methods
(e.g.\ ECPP). To test whether $N$ is a prime power, one may first perform a \emph{perfect-power test}
(decide whether $N=u^k$ with $k\ge 2$) and, if so, apply a primality test to the base~$u$.
See, for example, \cite{CrandallPomerance}.
\end{remark}

\begin{algorithm}[H]
\caption{\textsc{Diffusion-Assisted Factoring}$(N)$}
\label{alg:diffusion-factoring}
\DontPrintSemicolon
\KwIn{Odd composite $N\ge 3$ that is neither prime nor a prime power.}
\KwOut{A nontrivial factor $d$ of $N$.}

\While{true}{
  \BlankLine
  \textbf{(1) Random choice and gcd test.}\;
  Choose $a\gets$ uniform in $\{1,2,\dots,N-1\}$.\;
  $d\gets \gcd(a,N)$.\;
  \If{$1<d<N$}{\Return $d$}

  \BlankLine
  \textbf{(2) Compute the doubling multiset.}\;
  $M\gets \lfloor \log_2 N\rfloor+1$.\;
  Compute
  \[
    S(a)=\{a^{\pm 2^t}\bmod N:\ t=0,1,\dots,M\}
  \]
  by repeated squaring (and inversion).\;

  \BlankLine
  \textbf{(3) Early collision branch.}\;
  \If{$S(a)$ contains a repetition}{
    From the repetition, deterministically extract an odd $q$ and the least $s\ge 0$
    such that $a^{2^s q}\equiv 1\pmod N$ (Lemma~\ref{lem:nt:repetitions}).\;
    \If{$s>0$}{
      $x\gets a^{2^{s-1}q}\bmod N$.\;
      $d\gets \gcd(x-1,N)$.\;
      \If{$1<d<N$}{\Return $d$}
    }
  }

  \BlankLine
  \textbf{(4) Oddify the order.}\;
  $b\gets a^{2^M}\bmod N$.\; \tcp*[r]{then $\ord_N(b)$ is odd (Lemma~\ref{lem:nt:oddify})}

  \BlankLine
  \textbf{(5) Diffusion order finding.}\;
  Use the diffusion primitive (Theorem~\ref{thm:main}) to recover
  $r_b\gets \ord_N(b)$.\;

  \BlankLine
  \textbf{(6) Lift order and extract a factor.}\;
  Find the least $k\in\{0,1,\dots,M\}$ such that $a^{2^k r_b}\equiv 1\pmod N$.\;
  $r_a\gets 2^k r_b$.\;
  \If{$r_a$ is even}{
    $x\gets a^{r_a/2}\bmod N$.\;
    $d\gets \gcd(x-1,N)$.\;
    \If{$1<d<N$}{\Return $d$}
  }
  \textbf{continue} \tcp*[r]{\textsf{FAIL} for this $a$; restart with new $a$}
}
\end{algorithm}

\section{Relation finding via collisions and gcd stabilization}
\label{N5}

This section discusses a complementary (faster) mechanism for recovering the order $r=\ord_N(b)$
from explicit relations produced by collisions while exploring the Cayley graph of
\[
G=\langle b\rangle \subseteq (\Z/N\Z)^\ast.
\]
We view the Cayley graph as being explored from the identity by following short words in the dyadic
generating multiset
\[
S=\{\,b^{\pm 2^t}:\ 0\le t\le M\,\},
\qquad M=\lfloor \log_2 N\rfloor+1.
\]
Whenever two distinct sampled words land at the same vertex, we obtain an arithmetic
relation in the group.   Repeated collisions yield many such relations;
taking gcds of the corresponding exponents often stabilizes to $r$ (or to a small multiple of $r$).

We note that two such words combine to form a closed geodesic on the graph,
so the problem in hand is closely related to measuring the girth of the graph and
detecting its shortest closed non-trivial path.  As a result, we are showing another manner
in which the geometry of the graphs we consider are related to the arithmetic associated to $\ord_N(b)$.


\subsection{Exponent words and loop relations}
\label{subsec:loops}

As in the proof of Theorem~\ref{thm:main} (and throughout the paper), we work in exponent
coordinates. Assume $r$ is odd and identify
\[
G=\langle b\rangle \simeq \Z/r\Z,\qquad j\longmapsto b^j .
\]
A (non-lazy) word of length $L$ is a sequence
\[
w=\bigl[(\varepsilon_1,t_1),\dots,(\varepsilon_L,t_L)\bigr],
\qquad
\varepsilon_i\in\{\pm1\},\ \ t_i\in\{0,\dots,M\}.
\]
Its integer exponent and endpoint are
\[
E(w)=\sum_{i=1}^L \varepsilon_i 2^{t_i}\in\Z,
\qquad
x(w)=b^{E(w)}\in G .
\]
Clearly,
\begin{equation}\label{eq:E-bound-rel}
|E(w)|\le L\,2^M.
\end{equation}

\begin{definition}[Word collision and loop relation]\label{def:word-collision-loop}
Let $w,w'$ be words of length at most $L$.
We say that $(w,w')$ is a \emph{word collision} if
\[
x(w)=x(w') \quad \text{in } G.
\]
The associated \emph{loop difference} (or \emph{loop exponent}) is
\[
D(w,w')=E(w)-E(w').
\]
A word collision is called \emph{nontrivial} if $E(w)\neq E(w')$, equivalently if $D(w,w')\neq 0$.
\end{definition}

\begin{proposition}\label{prop:loop-multiple-clean}
Let $w,w'$ be words of length at most $L$ forming a \emph{nontrivial} word collision
(Definition~\ref{def:word-collision-loop}). Then
\[
D(w,w') \neq 0,\qquad r\mid D(w,w'),
\qquad \text{and}\qquad 0<|D(w,w')|\le 2L\,2^M.
\]
Equivalently, there exists $q\in\Z\setminus\{0\}$ such that
\[
D(w,w')=q\,r
\qquad\text{and}\qquad
|q\,r|\le 2L\,2^M.
\]
\end{proposition}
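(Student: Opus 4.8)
The plan is to unwind the definitions and reduce everything to the statement that $r=\ord_N(b)$ is precisely the order of $b$, so that an equality $b^{E(w)}=b^{E(w')}$ in $G$ is equivalent to a congruence $E(w)\equiv E(w')\pmod r$ on the integer exponents. The nonvanishing, the divisibility, and the size bound are then three essentially independent observations, each obtained from a single line of reasoning.

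First I would dispose of $D(w,w')\neq 0$: this is immediate, since by Definition~\ref{def:word-collision-loop} a collision is declared \emph{nontrivial} exactly when $E(w)\neq E(w')$, which is the definition of $D(w,w')\neq 0$. Next I would establish the divisibility $r\mid D(w,w')$, which is the conceptual heart of the statement. The collision hypothesis reads $x(w)=x(w')$ in $G$, i.e.\ $b^{E(w)}=b^{E(w')}$; multiplying by the inverse of $b^{E(w')}$ gives $b^{E(w)-E(w')}=e$, that is $b^{D(w,w')}=e$. Since $r=\ord_N(b)$ is by definition the least positive integer annihilating $b$, and since the set of integers $n$ with $b^{n}=e$ is exactly the subgroup $r\Z$, it follows that $r\mid D(w,w')$. (Equivalently, under the identification $G\simeq\Z/r\Z$, $j\mapsto b^{j}$, the equality $x(w)=x(w')$ becomes $E(w)\equiv E(w')\pmod r$.)

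For the size bound I would invoke the triangle inequality together with the a~priori estimate \eqref{eq:E-bound-rel}. Since each of $w,w'$ has length at most $L$, we have $|E(w)|\le L\,2^{M}$ and $|E(w')|\le L\,2^{M}$, so
\[
|D(w,w')|=|E(w)-E(w')|\le |E(w)|+|E(w')|\le 2L\,2^{M},
\]
which, combined with $D(w,w')\neq 0$, yields $0<|D(w,w')|\le 2L\,2^{M}$. Finally, the "equivalent" reformulation is just a restatement: because $r\mid D(w,w')$ and $D(w,w')\neq 0$, there is a unique $q\in\Z\setminus\{0\}$ with $D(w,w')=q\,r$, and then $|q\,r|=|D(w,w')|\le 2L\,2^{M}$.

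I do not expect any genuine obstacle here, as every claim follows directly from the definitions and from \eqref{eq:E-bound-rel}; the only point that needs a sentence of care is making explicit that the passage from the group equation $b^{D}=e$ to the arithmetic divisibility $r\mid D$ uses the defining property of the multiplicative order (that $\{n:b^{n}=e\}=r\Z$), rather than merely that $r$ is \emph{some} period. Everything else is the triangle inequality and bookkeeping.
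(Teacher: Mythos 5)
Your proposal is correct and follows essentially the same route as the paper's own proof: the nontriviality gives $D\neq 0$ by definition, the collision yields $b^{D(w,w')}\equiv 1\pmod N$ and hence $r\mid D(w,w')$ by the defining property of the order, and the size bound is the triangle inequality applied to \eqref{eq:E-bound-rel}. No gaps; your extra remark that one needs $\{n: b^n = e\} = r\Z$ (not merely that $r$ is some period) is a sensible point of care that the paper leaves implicit.
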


\begin{proof}
The collision $x(w)=x(w')$ means
\[
b^{E(w)}\equiv b^{E(w')}\pmod N,
\]
hence
\[
b^{E(w)-E(w')}\equiv 1\pmod N.
\]
By definition of $r=\ord_N(b)$, this implies $r\mid D(w,w')$.

If $E(w)=E(w')$, then $D(w,w')=0$ and the collision yields no relation; by assumption we are in
the nontrivial case, so $D(w,w')\neq 0$.

Finally, by \eqref{eq:E-bound-rel},
\[
|D(w,w')|\le |E(w)|+|E(w')|\le 2L\,2^M.\qedhere
\]
\end{proof}

\begin{remark}[Running gcd is always a multiple of $r$]\label{rem:gcd-multiple}
If $D_1,\dots,D_s$ are nonzero loop differences coming from nontrivial word collisions, then each
$D_i$ is divisible by $r$, hence
\[
r \mid g_s=\gcd(|D_1|,\dots,|D_s|).
\]
Thus the running gcd can only decrease as more relations are collected, and it always remains a
multiple of the true order.
\end{remark}


\subsection{Gcd stabilization and the zeta function}
\label{subsec:gcd-zeta-clean}

Each collision yields, by Proposition~\ref{prop:loop-multiple-clean}, a nonzero relation $D_i = q_i r \neq 0,$ $i=1,\ldots, s$ and we form the gcd $g_s=\gcd(|D_1|,\dots,|D_s|).$
Clearly,
\[
g_s=r\cdot \gcd(|q_1|,\dots,|q_s|).
\]
In particular, $g_s=r$ if and only if $\gcd(q_1,\dots,q_s)=1$.

Thus, order recovery reduces to understanding how quickly $\gcd(q_1,\dots,q_s)$ drops to $1$.

\begin{remark}\label{heur:indep-clean}
In favorable long walks, distinct collisions arise from essentially unrelated word pairs,
and the multipliers $q_i=D_i/r$ behave like approximately independent nonzero integers in a
comparable magnitude range.
\end{remark}
The following Theorem is well known in the literature (see \cite{Nym72} and \cite{Leh00}).

\begin{theorem}[Riemann's zeta law for gcds]\label{thm:zeta-gcd-clean}
Fix $s\ge 2$.  Let $U_1,\dots,U_s$ be independent random integers, each uniform on $\{1,2,\dots,Q\}$.
Then as $Q\to\infty$,
\[
\Pr(\gcd(U_1,\dots,U_s)=1)= \frac{1}{\zeta(s)}+O\left(\frac{\delta_{s=2} \log Q+1 }{Q}\right),
\]
where $\delta_{s=2}=1$ if $s=2$, and it equals zero otherwise.
\end{theorem}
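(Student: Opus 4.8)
The plan is to pass from the combinatorial event to a Dirichlet series by M\"obius inversion. Applying the elementary identity $\sum_{d\mid n}\mu(d)=\mathbf 1[n=1]$ to $n=\gcd(U_1,\dots,U_s)$ and taking expectations gives
\[
\Pr(\gcd(U_1,\dots,U_s)=1)=\sum_{d=1}^{Q}\mu(d)\,\Pr(d\mid U_1,\dots,d\mid U_s),
\]
where the sum genuinely truncates at $d=Q$ because no integer in $\{1,\dots,Q\}$ is divisible by any $d>Q$. Since each $U_i$ is uniform on $\{1,\dots,Q\}$, the count of multiples of $d$ there is $\lfloor Q/d\rfloor$, so by independence the inner probability equals $(\lfloor Q/d\rfloor/Q)^s$. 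This reduces the theorem to the analytic estimate $\sum_{d=1}^{Q}\mu(d)(\lfloor Q/d\rfloor/Q)^s=\zeta(s)^{-1}+O((\delta_{s=2}\log Q+1)/Q)$.

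First I would compare this finite sum with the convergent series $\sum_{d\ge 1}\mu(d)/d^s=1/\zeta(s)$, splitting the difference into a \emph{tail} over $d>Q$ and a \emph{floor discrepancy} over $1\le d\le Q$. For the tail, the trivial bound $|\mu(d)|\le 1$ gives $\bigl|\sum_{d>Q}\mu(d)/d^s\bigr|\le\sum_{d>Q}d^{-s}\le\int_Q^\infty x^{-s}\,dx=\tfrac{1}{s-1}\,Q^{1-s}=O(1/Q)$ for every $s\ge 2$. For the floor discrepancy I would write $\lfloor Q/d\rfloor/Q=1/d-\theta_d$ with $\theta_d=\{Q/d\}/Q\in[0,1/Q)$, and apply the mean value theorem to $x\mapsto x^s$ on the interval with endpoints $1/d-\theta_d$ and $1/d$ to obtain
\[
\left|\Bigl(\tfrac1d-\theta_d\Bigr)^s-\frac{1}{d^s}\right|\le s\,\theta_d\,\Bigl(\tfrac1d\Bigr)^{s-1}\le \frac{s}{Q\,d^{\,s-1}}.
\]

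Summing this bound over $1\le d\le Q$ is the step where the dichotomy in the error term emerges, and it is the one to handle with care. The total floor error is at most $\tfrac{s}{Q}\sum_{d=1}^{Q}d^{-(s-1)}$: when $s\ge 3$ the exponent $s-1\ge 2$ makes $\sum_{d\ge1}d^{-(s-1)}=\zeta(s-1)$ convergent, giving $O(1/Q)$, whereas when $s=2$ the sum is the harmonic sum $\sum_{d=1}^{Q}d^{-1}=\log Q+O(1)$, producing the extra factor $O(\log Q/Q)$. Adding the tail bound then yields precisely $O((\delta_{s=2}\log Q+1)/Q)$.

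The main obstacle is not any single inequality but the bookkeeping that isolates the exact origin of the $\log Q$: one must keep the two error channels separate and recognize that the logarithm is forced only by the boundary case $s=2$, where the partial sum of $d^{-(s-1)}$ just fails to converge. A secondary point I would check is that the M\"obius inversion is applied to the \emph{actual} truncated probability (with the $d>Q$ terms genuinely absent, not merely small), so that the tail estimate is an honest comparison between a finite sum and an infinite series rather than a double count of contributions already omitted.
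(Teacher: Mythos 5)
Your proof is correct and complete. Note first that the paper itself does not prove this theorem: it states the result and attributes it to the literature (Nymann and Lehmer), so there is no in-text argument to match against. Your M\"obius-inversion route --- writing $\Pr(\gcd=1)=\sum_{d\le Q}\mu(d)\bigl(\lfloor Q/d\rfloor/Q\bigr)^s$, then splitting the comparison with $\sum_{d\ge1}\mu(d)d^{-s}=1/\zeta(s)$ into a tail over $d>Q$ (bounded by $Q^{1-s}/(s-1)=O(1/Q)$) and a floor discrepancy controlled by the mean value theorem ($|(1/d-\theta_d)^s-d^{-s}|\le s/(Qd^{s-1})$ with $\theta_d=\{Q/d\}/Q$) --- is the standard quantitative argument, and it correctly isolates why the $\log Q$ appears only at $s=2$: the partial sums $\sum_{d\le Q}d^{-(s-1)}$ are bounded for $s\ge3$ but grow like $\log Q$ at $s=2$. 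For comparison, the alternative route one often sees (and which appears in a vestigial form in the paper's source) runs inclusion--exclusion over finite sets of primes, passes to the Euler product $\prod_\ell(1-\ell^{-s})=1/\zeta(s)$, and controls the tail of large primes; that argument yields only the qualitative limit $\Pr\to1/\zeta(s)$, whereas your version delivers the explicit error term $O\bigl((\delta_{s=2}\log Q+1)/Q\bigr)$ actually asserted in the statement. The only point worth making explicit in a final write-up is that the implied constants may depend on $s$, which is harmless since $s$ is fixed.
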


\begin{remark}\label{rem:zeta-numerics-clean}
\[
\frac{1}{\zeta(2)}=\frac{6}{\pi^2}\approx 0.608,\qquad
\frac{1}{\zeta(3)}\approx 0.832,\qquad
\frac{1}{\zeta(4)}=\frac{90}{\pi^4}\approx 0.924.
\]
This explains why only \emph{ a few independent
relations} often suffice for the running gcd to drop to the true order.
\end{remark}

\section{Collision bounds for the walk on $G$}
\label{N6}

We specialize the collision discussion to our (half--lazy) random walk on the cyclic group
\[
G=\langle b\rangle,\qquad |G|=r,
\]
with transition operator $W$ as in the previous sections. For $x,y\in G$ and $t\ge 0$, define the
$t$--step transition probabilities by
\[
p(y,x;t)=(W^t\delta_y)(x).
\]
Thus $p(y,x;t)$ is the probability that a walk started at $y$ is at $x$ after $t$ steps.  In
particular, the endpoint distribution at time $t$ for a walk started at the identity $e$ is
\[
p_t(x)=p(e,x;t)=(W^t\delta_e)(x),\qquad x\in G.
\]


To avoid dependence between samples taken along a single trajectory, we generate endpoints by
\emph{independent walks}: we run the walk for $t$ steps starting from $e$, record the endpoint,
restart at $e$, and repeat this procedure $T$ times.

\begin{definition}\label{def:omega-coordinates}
Fix $t\ge 0$ and an integer $T\ge 1$. Set $\Omega=G^T$.
For $\mathbf{x}=(x_1,\dots,x_T)\in \Omega$, define the coordinate maps
\[
X_i:\Omega\to G,\qquad X_i(\mathbf{x})=x_i\qquad (i=1,\dots,T).
\]
\end{definition}

\begin{definition}\label{def:iid-walk}
Assume Definition~\ref{def:omega-coordinates}. We say that $X_1,\dots,X_T$ are sampled by
\emph{independent restarts at time $t$} if for every $\mathbf{x}=(x_1,\dots,x_T)\in\Omega$,
\[
\Pr\bigl((X_1,\dots,X_T)=(x_1,\dots,x_T)\bigr)=\prod_{i=1}^T p(e,x_i;t).
\]
\end{definition}


\begin{definition}
  Assume Definition \ref{def:iid-walk}. We say that a \emph{collision} occurs in the sample  $X_1,\dots,X_T$ if there exist $1\le i<j\le T$ and $x\in G$, so that $ X_i (x)=X_j(x)$.
\end{definition}

\begin{definition}\label{def:s2}
Define the \emph{collision parameter} at time $t$ by
\[
s_2(t)=\sum_{x\in G}p(e,x;t)^2=\sum_{x\in G}p_t(x)^2.
\]
\end{definition}


\begin{theorem}\label{thm:birthday-scale}
Let $X_1,\dots,X_T$ be sampled by independent restarts at time $t$
(Definition~\ref{def:iid-walk}). Let $Z$ denote the number of colliding pairs among the $T$ samples, i.e.
\[
Z=\#\{(i,j):1\le i<j\le T,\ X_i(x)=X_j(x)\text{  for some  } x\in G\}.
\]
The expected value of the first collision among $T$ independent walks
at time $t$ is
\[
\mathrm E[Z]=\binom{T}{2}\,s_2(t).
\]
In particular, as $t\to\infty$, so that $x\mapsto p(e,x;t)$ is close to the uniform distribution on $G$, we
have that
\begin{equation}\label{eq. expentation collisions}
\mathrm E[Z] \sim \frac{T(T-1)}{2r}.
\end{equation}
\end{theorem}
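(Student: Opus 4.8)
The plan is to recognize $Z$ as a sum of pairwise collision indicators and reduce everything, via linearity of expectation, to a single pairwise collision probability. Interpreting the collision event between samples $i$ and $j$ as $\{X_i=X_j\}$ (the two recorded endpoints coincide in $G$), I would first write
\[
Z=\sum_{1\le i<j\le T}\mathbf 1\{X_i=X_j\},
\qquad\text{so that}\qquad
\mathrm E[Z]=\sum_{1\le i<j\le T}\Pr[X_i=X_j]
\]
by linearity of expectation. This step is purely formal and isolates the only quantity that needs computing, namely the collision probability for a single fixed pair.

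Next I would evaluate $\Pr[X_i=X_j]$ using the independence built into Definition~\ref{def:iid-walk}: the joint law factors as a product, so $X_i$ and $X_j$ are independent, each distributed according to $x\mapsto p(e,x;t)=p_t(x)$. Conditioning on the common value and summing over $G$,
\[
\Pr[X_i=X_j]=\sum_{x\in G}\Pr[X_i=x]\,\Pr[X_j=x]=\sum_{x\in G}p_t(x)^2=s_2(t),
\]
which is precisely the collision parameter of Definition~\ref{def:s2}. Since this probability is the same for every pair and there are $\binom{T}{2}$ unordered pairs, I obtain the exact identity $\mathrm E[Z]=\binom{T}{2}\,s_2(t)$.

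For the asymptotic claim I would invoke the mixing estimate. Using the spectral expansion $p_t=W^t\delta_e$ together with Parseval's identity gives $s_2(t)=\|p_t\|_2^2=\sum_{j=0}^{r-1}\psi_j(e)^2\lambda_j^{2t}$; the $j=0$ term contributes $\psi_0(e)^2=1/r$, while the remaining terms are bounded by $\lambda_1^{2t}\sum_{j\ge 1}\psi_j(e)^2\le\lambda_1^{2t}$, so that $s_2(t)=\tfrac1r+O(\lambda_1^{2t})\to\tfrac1r$ as $t\to\infty$ because $\lambda_1<1$. Alternatively, feeding the pointwise bound $|p_t(x)-1/r|\le\lambda_1^{t}$ of Proposition~\ref{prop:mixing} into $s_2(t)=\sum_x(\tfrac1r+\epsilon_t(x))^2$ and using $\sum_x\epsilon_t(x)=0$ (since $p_t$ is a probability distribution) yields the same limit. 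Multiplying by $\binom{T}{2}$ then gives $\mathrm E[Z]\to T(T-1)/(2r)$.

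I do not expect a genuine obstacle here: the core is the standard second-moment birthday identity, and the only point requiring care is the asymptotic, where I must justify $s_2(t)\to 1/r$ from the mixing bound and clarify that the symbol ``$\sim$'' refers to convergence with $T$ held fixed and $t\to\infty$ (equivalently, $\mathrm E[Z]\big/\bigl(T(T-1)/(2r)\bigr)=r\,s_2(t)\to 1$).
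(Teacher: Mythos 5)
Your proposal is correct and follows essentially the same route as the paper: decompose $Z$ into pairwise collision indicators, use linearity of expectation and the independence plus common law $p_t$ from Definition~\ref{def:iid-walk} to get $\Pr[X_i=X_j]=s_2(t)$, and multiply by $\binom{T}{2}$. Your treatment of the asymptotic is in fact slightly more careful than the paper's (which simply substitutes $p_t(x)\approx 1/r$), since you justify $s_2(t)\to 1/r$ via the spectral expansion and the mixing bound of Proposition~\ref{prop:mixing}.
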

\begin{proof}
The random variables $X_1,\dots,X_T$ are independent and
\[
\Pr(X_i=x)=p_t(x)=p(e,x;t), \qquad (x\in G,\,\, i=1,\ldots,T).
\]
Hence, the expected value $\mathrm E[Z]$ is
\[
\mathrm E[Z]=\sum_{1\le i<j\le T}\Pr( X_i(x)=X_j(x)\text{  for some  } x\in G).
\]
Fix $i<j$. Using independence and the common law $p_t$, we have
\[
\Pr(X_i=X_j)
=\sum_{x\in G}\Pr(X_i=x,\ X_j=x)
=\sum_{x\in G}\Pr(X_i=x)\Pr(X_j=x)
=\sum_{x\in G}p_t(x)^2
=s_2(t).
\]
There are $\binom{T}{2}$ choices of $(i,j)$, hence \eqref{eq. expentation collisions} holds true.

Finally, if $t$ is large enough that $p_t$ is close to uniform on $G$ (so $p_t(x)\approx 1/r$ for
all $x\in G$), then
\[
s_2(t)=\sum_{x\in G}p_t(x)^2 \approx \sum_{x\in G}\frac{1}{r^2}=\frac{1}{r},
\]
which proves the second statement.
\end{proof}

\begin{remark}
Heuristically speaking, Theorem \ref{thm:birthday-scale} tells us that the first collision among $T$ independent walks at time $t$ becomes plausible (meaning that the expectation of the collision is close to $1$) once
\[
\binom{T}{2}\,s_2(t)\approx 1,
\qquad\text{equivalently}\qquad
T\approx \frac{1}{\sqrt{s_2(t)}}.
\]
In particular, if $t$ is large enough that $x\mapsto p(e,x;t)$ is close to uniform on $G$, then
\[
s_2(t)\approx \frac{1}{r},
\qquad\text{and hence}\qquad
T\approx \sqrt{r}.
\]

\end{remark}

\begin{remark}
This heuristic explained above immediately separates two regimes for a \emph{purely digital} collision search
(i.e.\ when the walk is simulated and endpoints are generated by ordinary arithmetic, rather than
by a diffusion primitive).  In the generic situation one expects
\[
r=\ord_N(b)\ \text{to be large, typically comparable to }\varphi(N)\ (\text{and often of order }N).
\]
Then the collision scale becomes
\[
T\approx \sqrt{r}\ \approx\ \sqrt{\varphi(N)},
\]
so the number of restarts required to see a first collision is itself on the order of a square
root of the ambient group size.  This is precisely what one expects from a classical digital
algorithm whose only mechanism for producing relations is collision detection: it is governed by
statistics and does not yield a polynomial-time order-finding procedure in $\log N$.

On the other hand, if the chosen base happens to have \emph{small order}
\[
r=\ord_N(b)\ll \varphi(N),
\]
then the same estimate predicts a substantial digital speedup:
\[
T\approx \sqrt{r}\ \ll\ \sqrt{\varphi(N)}.
\]
In this situation the method can be quite practical on an ordinary digital computer, because the
sampling burden is reduced from a square root of $\varphi(N)$ down to a square root of $r$, and the
subsequent gcd-stabilization step typically needs only a small number of essentially independent
relations.  This behavior is visible in numerical experiments; see Example~\ref{E3}, where $r$ is
small enough that collisions appear after a manageable number of independent restarts and the
order is recovered quickly by standard digital processing.

It is worth separating this \emph{digital} speedup from what changes (and what does not) in the
diffusion-based model.  When the diffusion primitive of Section~\ref{4} is available, the order is not
recovered by waiting for random-walk endpoints to collide.  Instead, one reads a single heat-kernel
value after a controlled number of diffusion iterations: the analysis shows that after
$t=O((\log_2 N)^2)$ updates the distribution has flattened enough that $p_t(e)\approx 1/r$, and this
single scalar already determines $r$ by rounding.  Thus the scale $T\approx \sqrt{r}$ is not
the quantity governing performance in the diffusion order-finding procedure.

By contrast, if one tries to realize the \emph{collision method itself} in a diffusion device, then the
digital estimate $T\approx \sqrt{r}$ still describes how many essentially independent samples must be
generated to see collisions. Replacing $\sqrt{\varphi(N)}$ by $\sqrt{r}$ can be a major improvement in software, but it may have
limited impact on a physical diffuser, where the dominant costs are often dictated by the size,
energy, and precision required to represent and evolve a state whose underlying vertex set has
on the order of $r$ elements.

Finally, one can envision hybrid variants that use diffusion hardware more directly for collision-style
relation finding.  Rather than restarting only at the identity $e$, one could initiate diffusion from
many starting points simultaneously, allowing the Cayley graph to be explored in parallel as the mass
spreads outward from multiple sources.  In that picture, ``collisions'' would manifest as overlaps of
expanding profiles (heat fronts) rather than as literal coincidences of two sampled endpoints.  We do
not pursue such multi-source growth strategies here, but they suggest a natural direction in which
diffusion parallelism might be leveraged beyond the single-source setup analyzed above.
\end{remark}

\section{Examples}
\label{66}
This section illustrates the diffusion--based order recovery on different composite integers
as well as examples employing the digital collision strategy.

\subsection{Example 1: $N=299$ }
\label{subsec:example-299}

Take $N=299=13\cdot 23$ and $b=3\in(\Z/N\Z)^\ast.$
One checks that $\gcd(b,N)=1$. The element $b$ has \emph{odd} multiplicative order modulo $N$:
\[
  r=\ord_{299}(3)=33.
\]
As in Section 4, we form the Cayley graph on the cyclic subgroup
$\langle b\rangle\subset(\Z/N\Z)^\ast$ using generators $b^{\pm 2^t}$ for $0\le t\le M$ with $M=\lfloor \log_2 N\rfloor+1$.
See Figure \ref{fig:cayley-299}. Then we  run the half--lazy random walk started at the identity element.
\begin{figure}[H]
  \centering
  \includegraphics[width=0.5\textwidth]{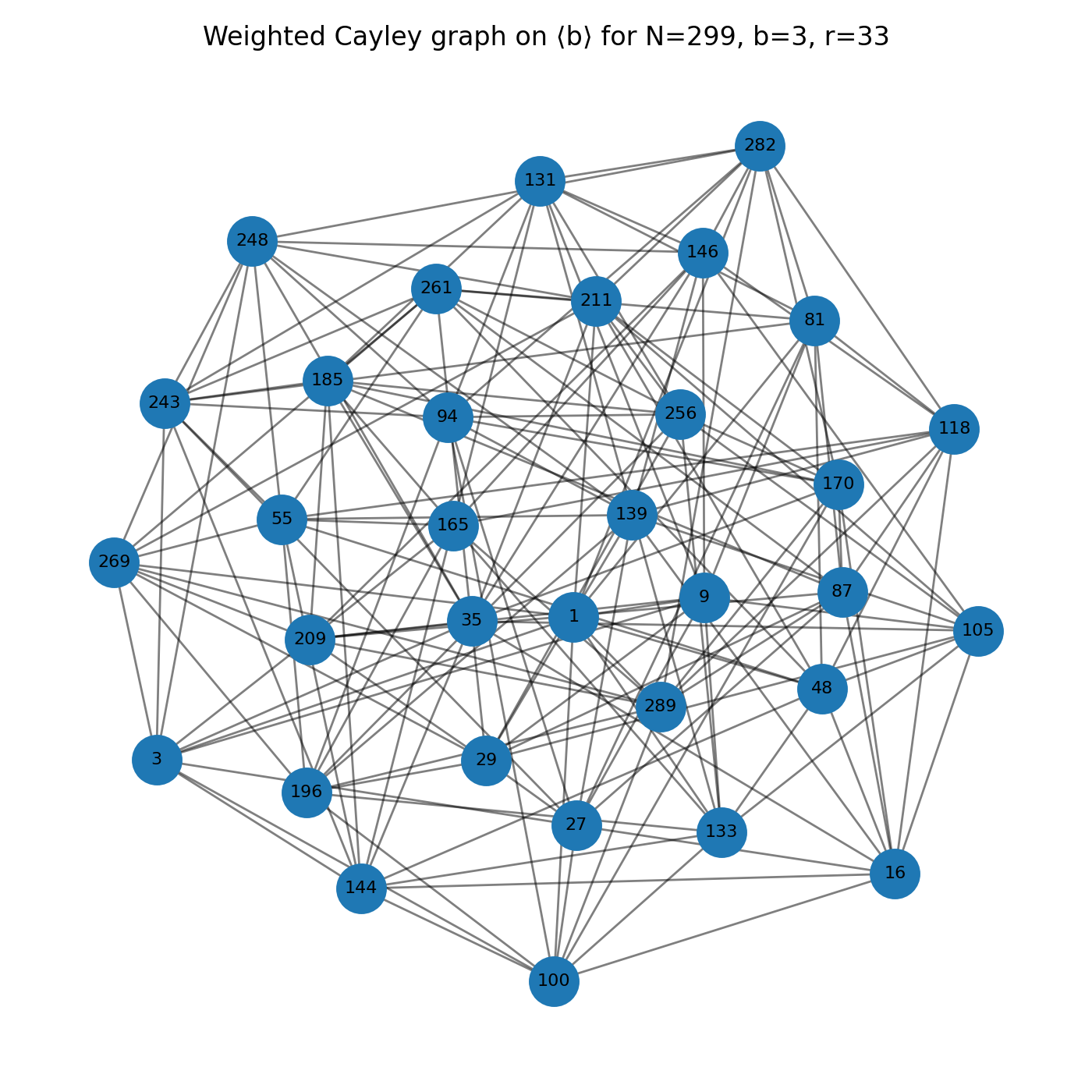}
  \caption{Cayley graph on $\langle 3\rangle\subset(\Z/299\Z)^\ast$ with generators $b^{\pm 2^t}$.
  Here $N=299$, $b=3$, and $\ord_{299}(3)=33$.}
  \label{fig:cayley-299}
\end{figure}

\begin{figure}[H]
  \centering
  \includegraphics[width=0.5\textwidth]{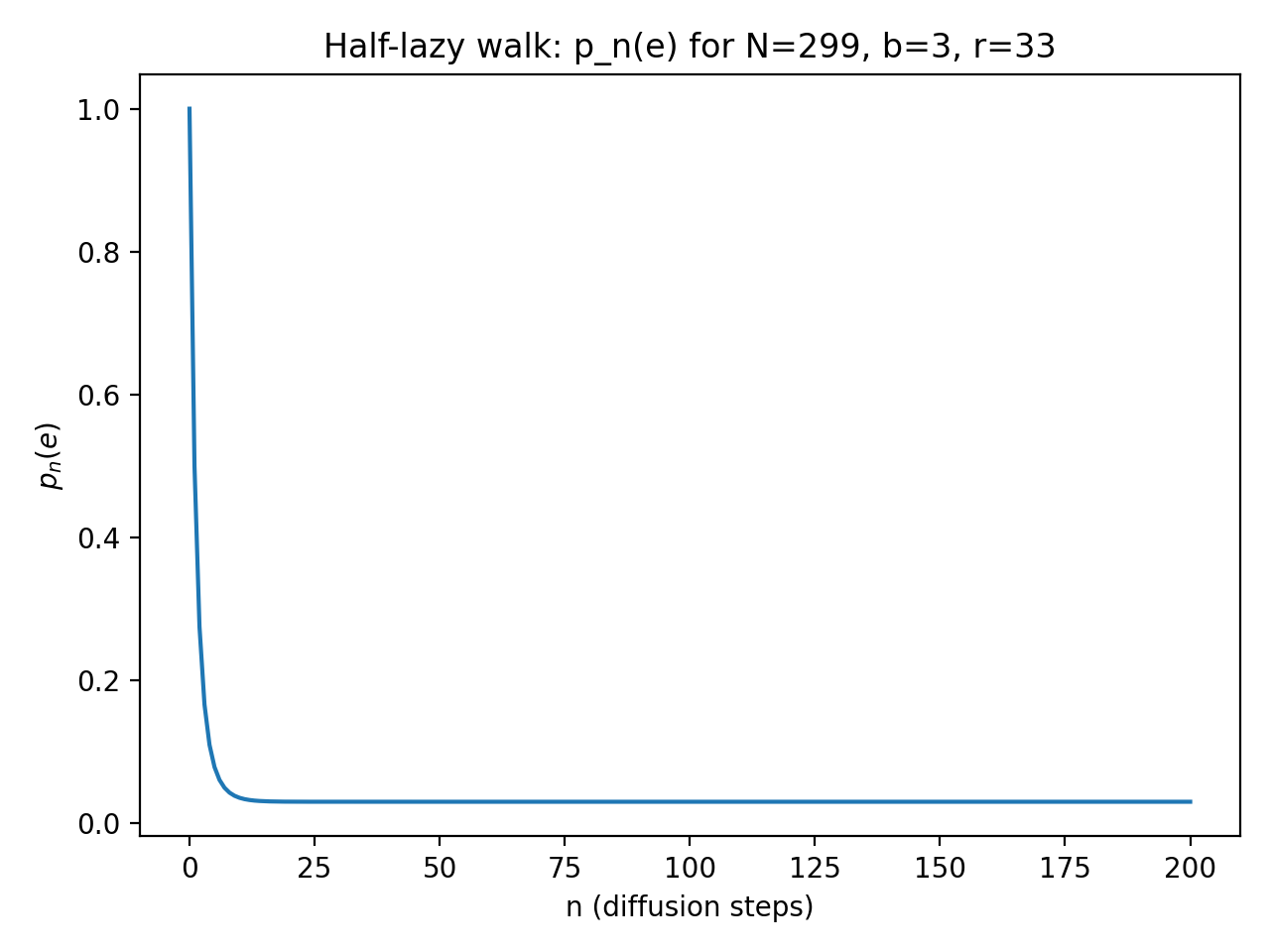}
  \caption{The identity value $p_n(e)$ for the half--lazy walk on $\langle 3\rangle$ (with $N=299$).}
  \label{fig:pid-299}
\end{figure}

\medskip
\noindent\textbf{Recovering the order by rounding.}
As shown in Figure~\ref{fig:pid-299}, $p_n(e)$ rapidly approaches $1/r$, so that $1/p_n(e)$ approaches $r=33$.
Figure~\ref{fig:invpid-299} plots $1/p_n(e)$ together with the target level $r$.
Numerically, rounding stabilizes quickly: from $n=17$ onward one has
\[
  \operatorname{round}\!\bigl(1/p_n(e)\bigr)=33.
\]
\begin{figure}[H]
  \centering
  \includegraphics[width=0.5\textwidth]{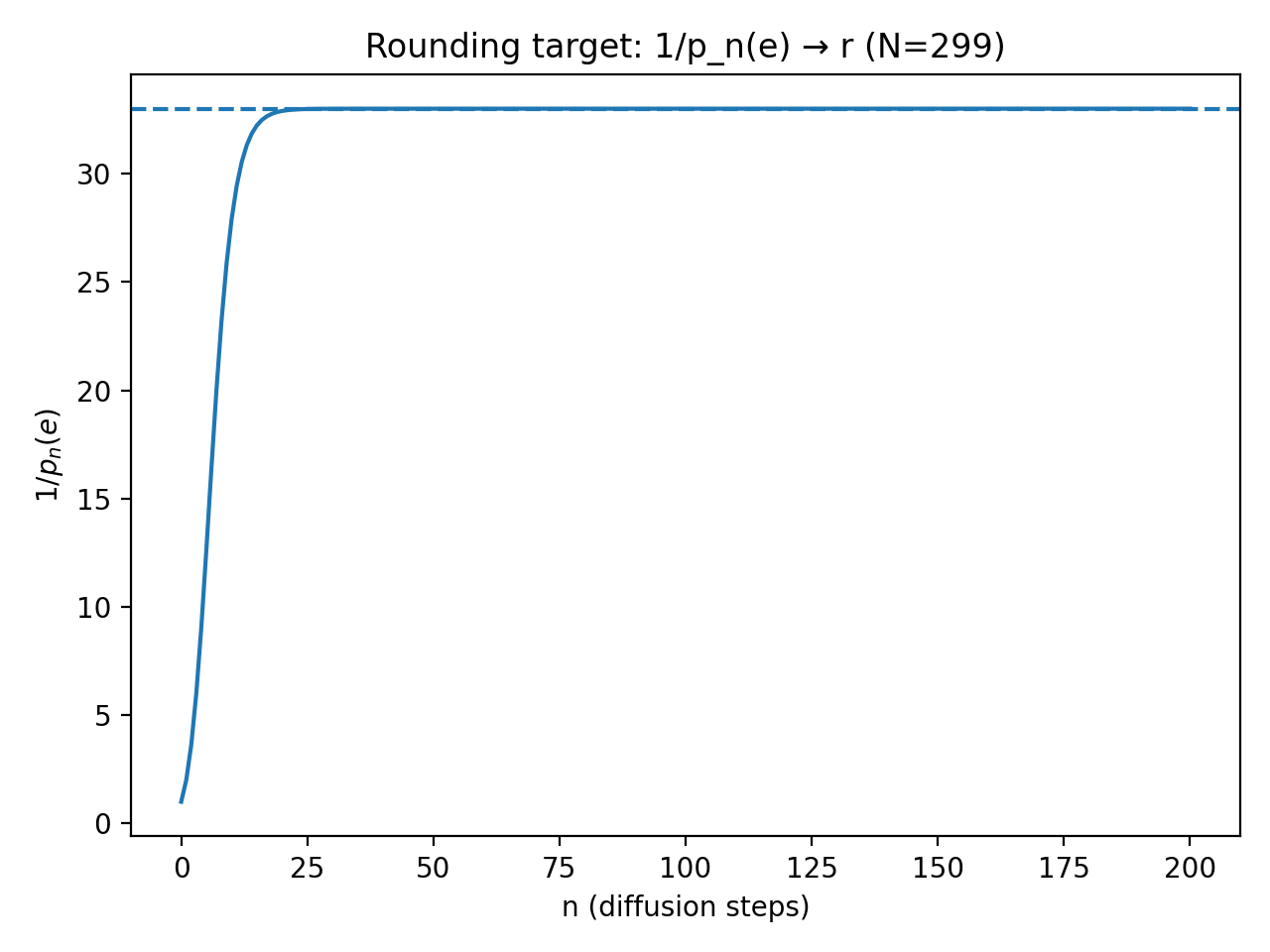}
  \caption{The sequence $1/p_n(e)$ converging to $r=\ord_{299}(3)=33$; rounding recovers $r$.
  (The dashed line indicates $r=33$.)}
  \label{fig:invpid-299}
\end{figure}

\noindent
This provides a concrete ``small-$N$'' visualization of the diffusion model:
the dynamics mix toward the uniform distribution on $\langle b\rangle$ (of size $r$),
and a single scalar readout $p_n(e)$ determines $r$ once it is close enough to $1/r$.

\subsection{Example 2: $N=1022117$}
\label{sec:example-1009-1013}

Now, we take  $N = 1009\cdot 1013 = 1022117$, and
$b = 576 \in (\mathbb{Z}/N\mathbb{Z})^\ast$.
One checks that $\gcd(b,N)=1$, and the order of $b$ modulo $N$ is odd:
\[
r=\ord_N(b)=5313.
\]
We run the half-lazy diffusion on the weighted Cayley graph of the cyclic subgroup
$\langle b\rangle$, using generators $b^{\pm 2^t}$ for $0\le t\le M$ with
$M=\lfloor \log_2 N\rfloor+1$.
The theory predicts that after $n=O((\log_2 N)^2)$ diffusion steps, the single heat-kernel
value at the identity determines $r$ by rounding $1/p_n(e)$. See Figures~\ref{fig:p-identity-1022117} and~\ref{fig:inv-p-identity-1022117}.

\medskip
\begin{remark}
The diffusion update is local, so an implementation does \emph{not} need to pre-construct the entire Cayley graph.  For visualization, however, we draw only a small neighborhood
of the identity (a local view), since the full graph has $r=5313$ vertices. See Figure~\ref{fig:cayley-1022117}.
\end{remark}
\begin{figure}[H]
  \centering
  \includegraphics[width=0.8\linewidth]{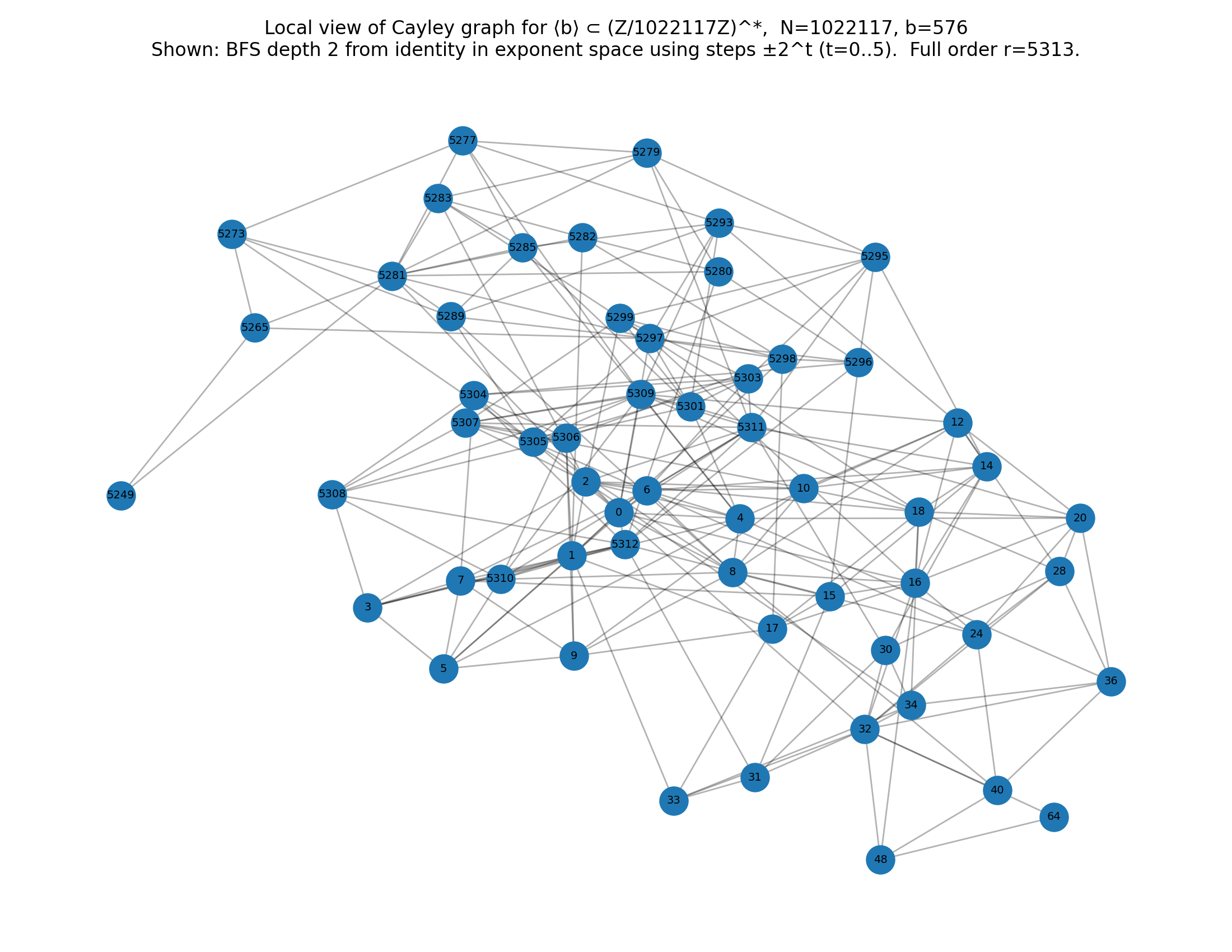}
  \caption{A \emph{local} view of the Cayley graph near the identity (shown in exponent coordinates).
  This is only for illustration; the full Cayley graph has $r=5313$ vertices.}
  \label{fig:cayley-1022117}
\end{figure}

\begin{figure}[H]
  \centering
  \includegraphics[width=0.7\linewidth]{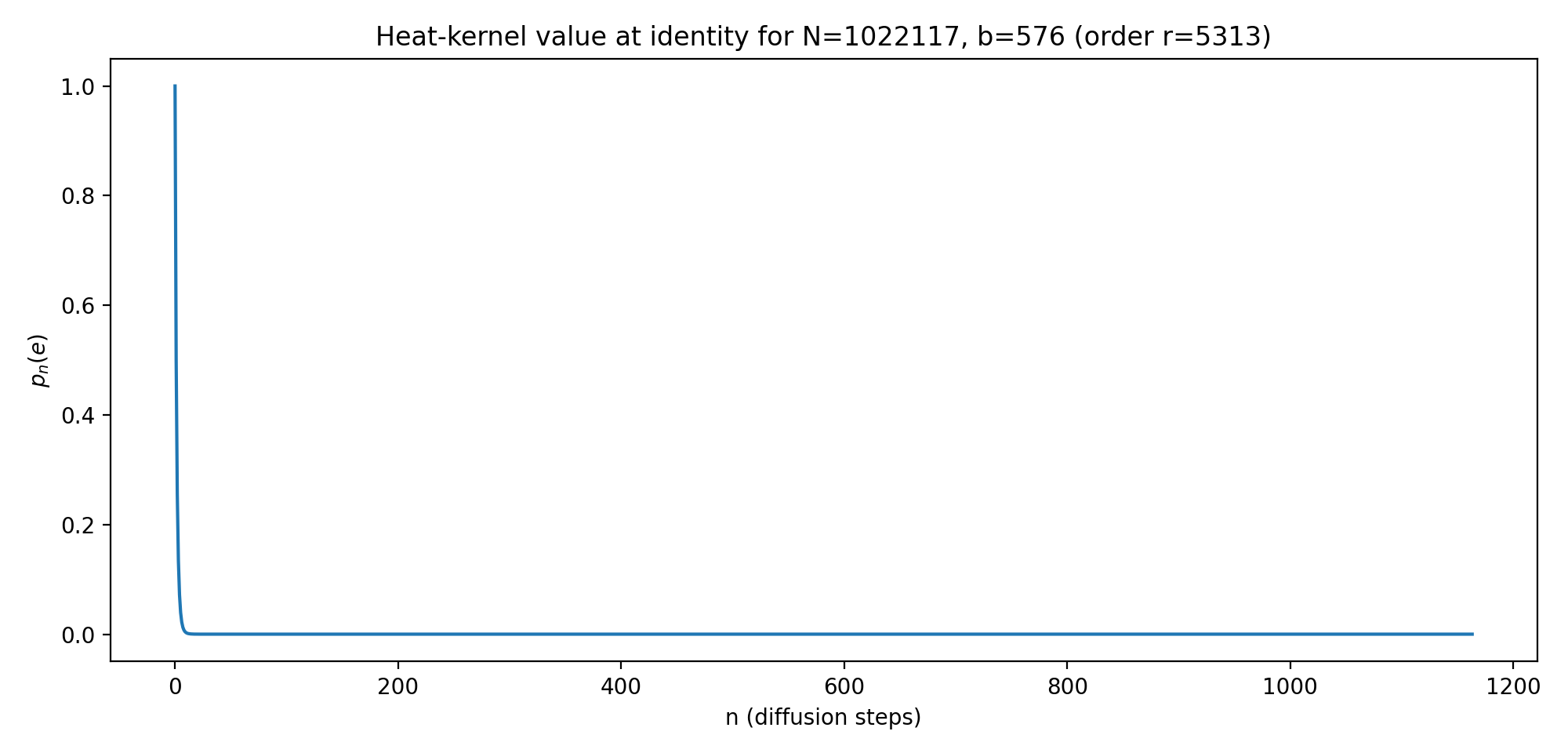}
  \caption{The identity heat-kernel value $p_n(e)$ as a function of the number $n$ of diffusion steps.
  As $n\to\infty$ one has $p_n(e)\to 1/r$.}
  \label{fig:p-identity-1022117}
\end{figure}

\begin{figure}[H]
  \centering
  \includegraphics[width=0.7\linewidth]{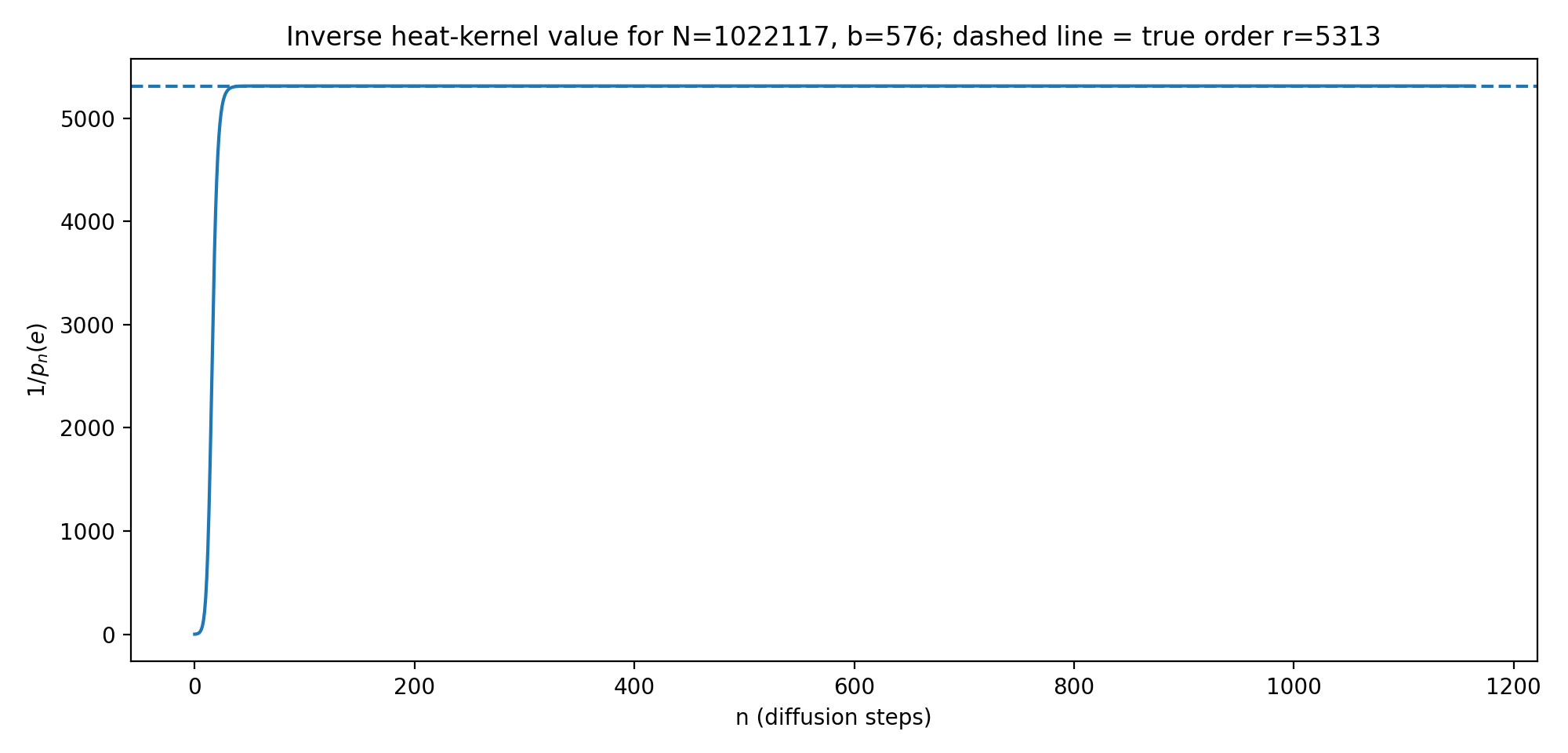}
  \caption{The inverse $1/p_n(e)$ converging to the true order $r=5313$ (dashed line).
  In this run, rounding $1/p_n(e)$ recovers $r$ after relatively few diffusion steps.}
  \label{fig:inv-p-identity-1022117}
\end{figure}

\subsection{Example 3: $
F_5 \;=\; 2^{2^5}+1$}
\label{E3}

We demonstrate the cycle-search implementation on the Fermat number
\[
F_5 \;=\; 2^{2^5}+1 \;=\; 2^{32}+1 \;=\; 4294967297.
\]
\[
\langle a\rangle \subset (\mathbb Z/N\mathbb Z)^{*}
\]
distinct words land at the same endpoint,
\[
a^{E_{\mathrm{new}}}\equiv a^{E_{\mathrm{prev}}}\pmod N,
\]
we record a \emph{cycle certificate}. This yields an exponent difference
\[
D \;=\; E_{\mathrm{new}}-E_{\mathrm{prev}}
\qquad\text{with}\qquad
a^{D}\equiv 1\pmod N,
\]
so $D$ is a multiple of the order $r=\ord_N(a)$. We then apply the standard ``halve when possible''
reduction to obtain a smaller multiple $D_{\min}$, and we maintain the running gcd
\[
g \;\leftarrow\; \gcd(g, D_{\min}).
\]
In practice, after a modest number of collisions this gcd often stabilizes to an order multiple,
which can then be reduced to the true order and fed into the usual order-to-factor step.

For $N=F_5$, using word length $L=2000$, at most $120000$ samples, and stability threshold $8$,
the algorithm succeeded on the first attempt, with random base
\[
a=3945765912.
\]
The successive cycle results and the evolution of the running gcd were:

\begin{verbatim}
[attempt 1] trying a = 3945765912
  sampling words of length L = 2000, max_samples = 120000, stable_hits = 8
[collision #  1]  D_min = 314919552   running_gcd = 314919552
[collision #  2]  D_min = 32543920512   running_gcd = 6700416
[collision #  3]  D_min = 52336949376   running_gcd = 6700416
[collision #  4]  D_min = 22975726464   running_gcd = 6700416
[collision #  5]  D_min = 40839035520   running_gcd = 6700416
[collision #  6]  D_min = 25012652928   running_gcd = 6700416
[collision #  7]  D_min = 4187760000   running_gcd = 6700416
[collision #  8]  D_min = 3986747520   running_gcd = 6700416
[collision #  9]  D_min = 18097823616   running_gcd = 6700416
[collision # 10]  D_min = 9749105280   running_gcd = 6700416
  stabilized gcd = 6700416
  reduced order r = 6700416
\end{verbatim}

After the second collision, the running gcd collapses to $g=6700416$ and remains unchanged; the
stabilization criterion is met after a few more collisions. The subsequent deterministic reduction
confirms that
\[
r=\ord_N(a)=6700416.
\]

Finally, since $r$ is even and $a^{r/2}\not\equiv \pm 1\pmod N$, the standard order-to-factor step
produces nontrivial divisors:
\[
\gcd\!\bigl(a^{r/2}-1,\,N\bigr)=6700417,
\qquad
\gcd\!\bigl(a^{r/2}+1,\,N\bigr)=641,
\]
and hence
\[
F_5 \;=\; 4294967297 \;=\; 641\cdot 6700417.
\]
In this run, the observed the clock time was $11.734$ seconds.

\subsection{Example 4: $N=8219999$}

A recently developed quantum-annealing approach embeds a compact binary
multiplier circuit into the Pegasus topology and reports factoring
\[
N=8219999 = 251\cdot 32749
\]
on a D-Wave Advantage 4.1 annealer; see~\cite{DSS24}.

We apply the diffusion-with-cycles implementation to the same integer $N$.
The algorithm explores the Cayley graph of the cyclic subgroup
\[
\langle a\rangle \subset (\mathbb Z/N\mathbb Z)^{\times}
\]
using dyadic generators $a^{\pm 2^t}$.
Whenever two distinct words land at the same endpoint,
\[
a^{E_{\mathrm{new}}}\equiv a^{E_{\mathrm{prev}}}\pmod N,
\]
we obtain a cycle certificate with exponent difference
$D=E_{\mathrm{new}}-E_{\mathrm{prev}}$ and hence $a^D\equiv 1\pmod N$.
After ``halving when possible'' to get $D_{\min}$, we update a running gcd
$g\leftarrow \gcd(g,D_{\min})$; once $g$ stabilizes, it is reduced to the true order
$r=\ord_N(a)$ and used in the standard order-to-factor step.

For this run we used word length $L=2000$, at most $120000$ samples, and stability threshold $8$.
The algorithm succeeded on the first attempt with base $a=7081686$:

\begin{verbatim}
[attempt 1] trying a = 7081686
  sampling words of length L = 2000, max_samples = 120000, stable_hits = 8
[collision #  1]  D_min = 12962750   running_gcd = 12962750
[collision #  2]  D_min = 111206750   running_gcd = 682250
[collision #  3]  D_min = 119393750   running_gcd = 682250
[collision #  4]  D_min = 42981750   running_gcd = 682250
[collision #  5]  D_min = 3411250   running_gcd = 682250
[collision #  6]  D_min = 3411250   running_gcd = 682250
[collision #  7]  D_min = 130309750   running_gcd = 682250
[collision #  8]  D_min = 68907250   running_gcd = 682250
[collision #  9]  D_min = 104384250   running_gcd = 682250
[collision # 10]  D_min = 55262250   running_gcd = 682250
  stabilized gcd = 682250
  reduced order r = 682250

SUCCESS: N = 32749 * 251

FINAL: 8219999 = 251 * 32749
TOTAL TIME: 3.120 s  (0:00:03)
\end{verbatim}

Thus, on this instance the cycle method recovers the factorization
\[
8219999 = 251\cdot 32749
\]
in about $3.12$ seconds of clock time in the reported run.

\subsection{Example 5: $N=1099551473989$}
In work involving Zapata Computing and collaborators, the integer \(N=1{,}099{,}551{,}473{,}989\) was factored using the variational
quantum factoring (VQF) workflow on a superconducting quantum processor; see \cite{Karamlou2021VQF}.  The following is the results
of the collision-strategy.

\begin{verbatim}
Enter N (odd, >3, not prime, not prime power), up to ~10 digits recommended: 1099551473989
Choose word length L (e.g. 800–2000): 2000
Choose max_samples per attempt (e.g. 30000–120000): 120000
Choose max_attempts (e.g. 20–80): 80

[attempt 1] trying a = 28213600916
  sampling words of length L = 2000, max_samples = 120000, stable_hits = 8
  no stabilized gcd from loops in this attempt (try another a).

[attempt 2] trying a = 1010844181454
  sampling words of length L = 2000, max_samples = 120000, stable_hits = 8
  no stabilized gcd from loops in this attempt (try another a).

[attempt 3] trying a = 45342608514
  sampling words of length L = 2000, max_samples = 120000, stable_hits = 8
[collision #  1]  D_min = 33192646812150   running_gcd = 33192646812150
  no stabilized gcd from loops in this attempt (try another a).

[attempt 4] trying a = 528571176770
  sampling words of length L = 2000, max_samples = 120000, stable_hits = 8
  no stabilized gcd from loops in this attempt (try another a).

[attempt 5] trying a = 591840418317
  sampling words of length L = 2000, max_samples = 120000, stable_hits = 8
  no stabilized gcd from loops in this attempt (try another a).

[attempt 6] trying a = 327685619692
  sampling words of length L = 2000, max_samples = 120000, stable_hits = 8
  no stabilized gcd from loops in this attempt (try another a).

[attempt 7] trying a = 1046655708948
  sampling words of length L = 2000, max_samples = 120000, stable_hits = 8
  no stabilized gcd from loops in this attempt (try another a).

[attempt 8] trying a = 477422510912
  sampling words of length L = 2000, max_samples = 120000, stable_hits = 8
  no stabilized gcd from loops in this attempt (try another a).

[attempt 9] trying a = 378381310618
  sampling words of length L = 2000, max_samples = 120000, stable_hits = 8
  no stabilized gcd from loops in this attempt (try another a).

[attempt 10] trying a = 747483761123
  sampling words of length L = 2000, max_samples = 120000, stable_hits = 8
  no stabilized gcd from loops in this attempt (try another a).

[attempt 11] trying a = 1001492753655
  sampling words of length L = 2000, max_samples = 120000, stable_hits = 8
  no stabilized gcd from loops in this attempt (try another a).

[attempt 12] trying a = 394688788349
  sampling words of length L = 2000, max_samples = 120000, stable_hits = 8
  no stabilized gcd from loops in this attempt (try another a).

[attempt 13] trying a = 478211699347
  sampling words of length L = 2000, max_samples = 120000, stable_hits = 8
  no stabilized gcd from loops in this attempt (try another a).

[attempt 14] trying a = 365250373964
  sampling words of length L = 2000, max_samples = 120000, stable_hits = 8
  no stabilized gcd from loops in this attempt (try another a).

[attempt 15] trying a = 893381186972
  sampling words of length L = 2000, max_samples = 120000, stable_hits = 8
  no stabilized gcd from loops in this attempt (try another a).

[attempt 16] trying a = 750796458253
  sampling words of length L = 2000, max_samples = 120000, stable_hits = 8
[collision #  1]  D_min = 3966231680600   AGGRESSIVE ONE-COLLISION FACTOR: 1048589 * 1048601

FINAL: 1099551473989 = 1048589 * 1048601
TOTAL TIME: 3132.641 s  (0:52:13)
\end{verbatim}

\section{A minimal RC-network implementation of the diffusion primitive}
\label{7}

To conclude, let us
briefly describe a possible physical realization of a diffusion primitive in \emph{continuous time}
and explain why sampling at a small time step $\Delta t$ approximates the discrete operator
used in our model.

Fix a finite undirected weighted graph with vertex set $V=\{1,\dots,r\}$ and symmetric
conductances $g_{ij}=g_{ji}\ge 0$ ($g_{ij}=0$ if there is no edge).
Attach to each vertex $i$ a capacitor $C>0$ to ground, and connect vertices $i$ and $j$
by a resistor whose conductance is $g_{ij}=1/R_{ij}$.
Let $V_i(t)$ be the voltage at node $i$ at time $t$, and set
\[
V(t)=(V_1(t),\dots,V_r(t))^{\mathsf T}.
\]

By Kirchhoff's current law, the capacitor current equals minus the net resistive outflow:
\[
C\,\frac{d}{dt}V_i(t) = -\sum_{j=1}^r g_{ij}\bigl(V_i(t)-V_j(t)\bigr).
\]
Let $L$ be the associated weighted graph Laplacian, meaning that
\[
L_{ii}=\sum_{j\ne i} g_{ij},
\qquad
L_{ij}=-g_{ij}\ \ (i\ne j).
\]
Then the network dynamics are the linear ODE
\begin{equation}\label{eq:cont-diffusion}
C\,\frac{d}{dt}V(t) = -L\,V(t),
\qquad\text{hence}\qquad
V(t)=\exp\!\left(-\frac{t}{C}L\right)V(0).
\end{equation}
Thus the $RC$ network implements the continuous-time heat flow on the graph.

In the abstract model of this paper, one diffusion step is a fixed linear operator
\[
W:\mathbb{R}^r\to\mathbb{R}^r
\]
applied repeatedly.  A continuous-time implementation produces instead the semigroup
$t\mapsto \exp(-(t/C)L)$, and a natural discrete-time primitive is obtained by sampling at
a fixed time increment $\Delta t>0$:
\begin{equation}\label{eq:W-sampling}
V^{(n+1)} = V((n+1)\Delta t)
= \exp\!\left(-\frac{\Delta t}{C}L\right) V(n\Delta t)
= W_{\Delta t}\,V^{(n)},
\end{equation}
where
\[
W_{\Delta t}=\exp\!\left(-\frac{\Delta t}{C}L\right).
\]
This is an \emph{exact} discrete-time evolution obtained by observing the continuous circuit
only at times $0,\Delta t,2\Delta t,\dots$.

For sufficiently small $\Delta t$, the matrix exponential admits the first-order expansion
\begin{equation}\label{eq:expansion}
\exp\!\left(-\frac{\Delta t}{C}L\right)
= I-\frac{\Delta t}{C}L + O(\Delta t^2),
\qquad \Delta t\to 0,
\end{equation}
with the error understood entrywise or in operator norm.
Consequently, one sampled step satisfies
\[
V^{(n+1)}-V^{(n)}
= -\frac{\Delta t}{C}L\,V^{(n)} + O(\Delta t^2).
\]
In words: over a very short time interval, each node voltage changes by a small amount
proportional to the Laplacian (a weighted difference between the node and its neighbors),
which is the defining local averaging mechanism of diffusion.

Equivalently, if one specifies a target discrete-time averaging operator of the form
\[
P = I-\gamma L
\]
for some scale $\gamma>0$, then choosing
\[
\gamma=\frac{\Delta t}{C}
\]
makes the circuit's sampled step satisfy
\[
W_{\Delta t}=\exp\!\left(-\frac{\Delta t}{C}L\right)=P+O(\Delta t^2).
\]
Therefore, by taking $\Delta t$ small (and correspondingly increasing the number of sampled
steps to reach a fixed physical mixing time), the continuous-time $RC$ diffusion approximates
the discrete diffusion process used in our analysis.

\subsubsection*{Implementation}
In a physical realization, the circuit runs continuously, and one ``diffusion step'' in the
computational accounting corresponds to either:
\begin{itemize}
\item sampling the node voltages every $\Delta t$ seconds (giving the exact map $W_{\Delta t}$), or
\item operating in a regime where $\Delta t$ is small so that each sample is a small local
averaging update, approximating a prescribed discrete operator.
\end{itemize}
This provides a concrete interpretation of the diffusion primitive as a hardware-supported
local relaxation step.

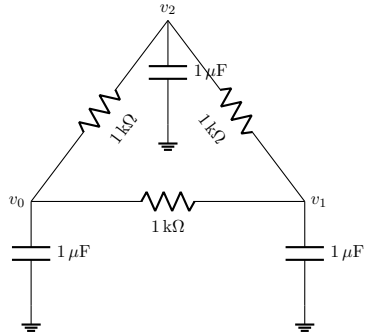
\begin{figure}[H]
\centering
\begin{circuitikz}[american, scale=0.6, transform shape]
  \coordinate (v0) at (0,0);
  \coordinate (v1) at (6,0);
  \coordinate (v2) at (3,4);

  \draw (v0) to[R, l_=$1\,\mathrm{k}\Omega$] (v1);
  \draw (v1) to[R, l^=$1\,\mathrm{k}\Omega$] (v2);
  \draw (v2) to[R, l^=$1\,\mathrm{k}\Omega$] (v0);

  \draw (v0) to[C, l=$1\,\mu\mathrm{F}$] ++(0,-2.3) node[ground]{};
  \draw (v1) to[C, l=$1\,\mu\mathrm{F}$] ++(0,-2.3) node[ground]{};
  \draw (v2) to[C, l=$1\,\mu\mathrm{F}$] ++(0,-2.3) node[ground]{};

  \node[left]  at (v0) {$v_0$};
  \node[right] at (v1) {$v_1$};
  \node[above] at (v2) {$v_2$};

\end{circuitikz}
\caption{RC network implementing diffusion on the Cayley graph for $N=21$.
Take $N=21$ and choose $a=2$. With $M=\lfloor \log_2 21\rfloor+1=5$, set
$b\equiv a^{2^{M}}\equiv 2^{32}\pmod{21}$.
Since $\ord_{21}(2)=6$, we have $2^{32}\equiv 2^{2}\equiv 4\pmod{21}$, hence
$b=4$ and $\ord_{21}(4)=3$.
Therefore $G=\langle 4\rangle=\{1,4,16\}$ has three vertices, and the generators
$b^{\pm 2^t}$ reduce to steps $\pm 1$ on $\Z/3\Z$, so the Cayley graph is a $3$-cycle.}

\label{fig:rc-21}
\end{figure}

\section*{Acknowledgments}
Carlos A. Cadavid gratefully acknowledges the financial support of Universidad EAFIT (Colombia) for the project Study and
Applications of Diffusion Processes of Importance in Health and Computation (project code 11740052022).
Jay Jorgenson acknowledges grant support from PSC-CUNY Award
68462-00 56, which is jointly funded
by the Professional Staff Congress and The City University of New York.
Juan D. Vélez gratefully acknowledges the Universidad Nacional de Colombia for its support during this research.

\section*{Author information}

Carlos A.\ Cadavid\\
Department of Mathematics, Universidad Eafit, Medell\'in, Colombia\\
\texttt{ccadavid@eafit.edu.co}

\medskip
Paulina Hoyos\\
Department of Mathematics, The University of Texas at Austin, USA\\
\texttt{paulinah@utexas.edu}

\medskip
Jay Jorgenson\\
Department of Mathematics, The City College of New York, USA\\
\texttt{jjorgenson@mindspring.com}

\medskip
Lejla Smajlovi\'c\\
Department of Mathematics, University of Sarajevo, Bosnia and Herzegovina\\
\texttt{lejlas@pmf.unsa.ba}

\medskip
Juan D.\ V\'elez\\
Department of Mathematics, Universidad Nacional de Colombia, Medell\'in, Colombia\\
\texttt{jdvelez@unal.edu.co}


\begin{thebibliography}{99}



\bibitem[AKS04]{AKS04}
M.~Agrawal, N.~Kayal, and N.~Saxena,
\emph{PRIMES is in P},
Ann.\ of Math.\ (2) \textbf{160} (2004), 781--793.

\bibitem[Ba79]{Ba79}
L.~Babai,
\emph{Spectra of Cayley graphs},
J.\ Combin.\ Theory Ser.\ B \textbf{27} (1979), no.~2, 180--189.

\bibitem[BS96]{BachShallit}
E.~Bach and J.~Shallit,
\emph{Algorithmic Number Theory, Volume 1: Efficient Algorithms},
MIT Press, 1996.

\bibitem[Be07]{Be07}
D.~J.~Bernstein, H.~W.~Lenstra, and J.~Pila,
\emph{Detecting perfect powers by factoring into coprimes},
Math.\ Comp.\ \textbf{76} (2007), 385--388.

\bibitem[CBK22]{CBK22}
D.~Chicayban Bastos and L.~A.~Kowada,
\emph{A quantum version of Pollard's Rho of which Shor's algorithm is a particular case},
in: \emph{Computing and Combinatorics},
Lecture Notes in Comput.\ Sci.\ \textbf{13595}, Springer, Cham, 2022, 212--219.

\bibitem[CHJSV23]{CHJSV23}
C.~A.~Cadavid, P.~Hoyos, J.~Jorgenson, L.~Smajlovi\'c, and J.~D.~V\'elez,
\emph{Discrete diffusion-type equation on regular graphs and its applications},
J.\ Difference Equ.\ Appl.\ \textbf{29} (2023), no.~4, 455--488.

\bibitem[Ch97]{Chung97}
F.~R.~K.~Chung,
\emph{Spectral Graph Theory},
American Mathematical Society, 1997.

\bibitem[CP05]{CrandallPomerance}
R.~Crandall and C.~Pomerance,
\emph{Prime Numbers: A Computational Perspective},
2nd ed., Springer, 2005.

\bibitem[CR62]{CR62}
C.~W.~Curtis and I.~Reiner,
\emph{Representation Theory of Finite Groups and Associative Algebras},
Wiley, 1962; reprint: Wiley Classics Library, 1988.

\bibitem[DSS24]{DSS24}
J.~Ding, G.~Spallitta, and R.~Sebastiani,
\emph{Effective prime factorization via quantum annealing by modular locally-structured embedding},
Sci.\ Rep.\ \textbf{14} (2024), Article~3518.
doi:\,10.1038/s41598-024-53708-7.

\bibitem[EJ96]{EkertJozsa96}
A.~K.~Ekert and R.~Jozsa,
\emph{Quantum computation and Shor's factoring algorithm},
Rev.\ Mod.\ Phys.\ \textbf{68} (1996), no.~3, 733--753.

\bibitem[HW08]{HardyWright}
G.~H.~Hardy and E.~M.~Wright,
\emph{An Introduction to the Theory of Numbers},
6th ed., Oxford University Press, 2008.

\bibitem[Hh25]{Hh25}
M.~Hhan,
\emph{A new approach to generic lower bounds: classical/quantum MDL, quantum factoring, and more},
in: \emph{Advances in Cryptology---EUROCRYPT 2025. Part VII},
Lecture Notes in Comput.\ Sci.\ \textbf{15607}, Springer, Cham, 2025, 345--374.

\bibitem[KM12]{KM12}
J.~Kaczorowski and G.~Molteni,
\emph{Extremal values for the sum $\sum_{r=1}^{\tau} e(a2^r/q)$},
J.\ Number Theory \textbf{132} (2012), 2595--2603.

\bibitem[Kn97]{Knuth2}
D.~E.~Knuth,
\emph{The Art of Computer Programming, Volume 2: Seminumerical Algorithms},
3rd ed., Addison--Wesley, 1997.

\bibitem[KSK21]{Karamlou2021VQF}
A.~H.~Karamlou, W.~A.~Simon, A.~Katabarwa, T.~L.~Scholten, B.~Peropadre, Y.~Cao, et~al.,
\emph{Analyzing the performance of variational quantum factoring on a superconducting quantum processor},
npj Quantum Information \textbf{7} (2021), Article 156.
DOI: 10.1038/s41534-021-00478-z.

\bibitem[KSV99]{KSV99}
A.~Yu.~Kitaev, A.~H.~Shen, and M.~N.~Vyalyi,
\emph{Classical and Quantum Computation},
Graduate Studies in Mathematics \textbf{47}, American Mathematical Society, Providence, RI, 2002
(translated from the 1999 Russian original by L.~J.~Senechal).

\bibitem[Leh00]{Leh00}
D.~N.~Lehmer,
\emph{Asymptotic evaluation of certain totient sums},
Amer.\ J.\ Math.\ \textbf{22} (1900), 293--335.

\bibitem[LPW09]{LPW09}
D.~A.~Levin, Y.~Peres, and E.~L.~Wilmer,
\emph{Markov Chains and Mixing Times},
American Mathematical Society, 2009.

\bibitem[Me96]{MenezesBook}
A.~J.~Menezes, P.~C.~van Oorschot, and S.~A.~Vanstone,
\emph{Handbook of Applied Cryptography},
CRC Press, 1996.

\bibitem[NC10]{NC10}
M.~A.~Nielsen and I.~L.~Chuang,
\emph{Quantum Computation and Quantum Information},
10th Anniversary Edition, Cambridge University Press, 2010.

\bibitem[Ni18]{Ni18}
B.~Nica,
\emph{A Brief Introduction to Spectral Graph Theory},
EMS Textbooks in Mathematics, European Mathematical Society, 2018.

\bibitem[Nym72]{Nym72}
J.~E.~Nymann,
\emph{On the probability that $k$ positive integers are relatively prime},
J.\ Number Theory \textbf{4} (1972), 469--473.

\bibitem[Ra80]{Ra80}
M.~O.~Rabin,
\emph{Probabilistic algorithm for testing primality},
J.\ Number Theory \textbf{12} (1980), no.~1, 128--138.

\bibitem[Ra24]{Ra24}
S.~Ragavan and V.~Vaikuntanathan,
\emph{Space-efficient and noise-robust quantum factoring},
in: \emph{Advances in Cryptology---CRYPTO 2024. Part VI},
Lecture Notes in Comput.\ Sci.\ \textbf{14925}, Springer, Cham, 2024, 107--140.

\bibitem[Re25]{Re25}
O.~Regev,
\emph{An efficient quantum factoring algorithm},
J.\ ACM \textbf{72} (2025), no.~1, Art.~10, 13 pp.
doi:\,10.1145/3708471.

\bibitem[Sh94]{Shor94}
P.~W.~Shor,
\emph{Algorithms for quantum computation: Discrete logarithms and factoring},
in: \emph{Proceedings of the 35th Annual Symposium on Foundations of Computer Science (FOCS)},
IEEE, 1994, 124--134.

\bibitem[Sh97]{Sh97}
P.~W.~Shor,
\emph{Polynomial-time algorithms for prime factorization and discrete logarithms on a quantum computer},
SIAM J.\ Comput.\ \textbf{26} (1997), no.~5, 1484--1509.

\bibitem[Va19]{Va19}
J.~Vandehey,
\emph{Differencing methods for Korobov-type exponential sums},
J.\ Anal.\ Math.\ \textbf{138} (2019), 405--439.

\bibitem[XQLM23]{XQLM23}
L.~Xiao, D.~Qiu, L.~Luo, and P.~Mateus,
\emph{Distributed Shor's algorithm},
Quantum Inf.\ Comput.\ \textbf{23} (2023), no.~1--2, 27--44.

\bibitem[Za13]{Za13}
P.~Zawadzki,
\emph{Closed-form formula on quantum factorization effectiveness},
Quantum Inf.\ Process.\ \textbf{12} (2013), 97--108.

\end{thebibliography}
\end{document}